   \def\MR#1{}
\definecolor{darkred}{RGB}{139,0,0}
\definecolor{darkblue}{RGB}{0,0,139}
\definecolor{darkgreen}{RGB}{0,100,0}
\setlist[enumerate]{leftmargin=*}
\setlist[itemize]{leftmargin=*}
\theoremstyle{theorem}
\newtheorem{theorem}{Theorem}[section]
\newtheorem{lemma}[theorem]{Lemma}
\newtheorem{conjecture}[theorem]{Conjecture}
\newtheorem{proposition}[theorem]{Proposition}
\newtheorem{corollary}[theorem]{Corollary}
\theoremstyle{definition}
\newtheorem{remark}[theorem]{Remark}
\numberwithin{equation}{section}
\newcommand{\RR}{\mathbb{R}}
\newcommand{\DD}{\mathbb{D}}
\newcommand{\ZZ}{\mathbb{Z}}
\newcommand{\sphere}{\mathbb{S}}
\newcommand{\subm}{P}
\DeclareMathOperator{\Iso}{Isom}
\DeclareMathOperator{\Diff}{Diff}
\DeclareMathOperator{\Homeo}{Homeo}
\newcommand{\oO}{\mathrm{O}}
\newcommand{\oH}{\mathrm{H}}
\newcommand{\id}{\mathrm{id}}
\newcommand{\ra}{\rightarrow}
\newcommand{\fol}{\mathcal{F}}
\patchcmd{\@setaddresses}{\indent}{\noindent}{}{}
\patchcmd{\@setaddresses}{\indent}{\noindent}{}{}
\patchcmd{\@setaddresses}{\indent}{\noindent}{}{}
\patchcmd{\@setaddresses}{\indent}{\noindent}{}{}
\begin{document}

%-----------------------------------------------------------------------------
% BEGIN FRONT MATTER ----------------------------------------------------------
%-----------------------------------------------------------------------------

% TITLE

\title{Isoparametric foliations and bounded geometry}

% AUTHOR

\author{Manuel Krannich}
\address{Department of Mathematics, Karlsruhe Institute of Technology, Karlsruhe, Germany}
\email{krannich@kit.edu}
\author{Alexander Lytchak}
\address{Department of Mathematics, Karlsruhe Institute of Technology, Karlsruhe, Germany}
\email{alexander.lytchak@kit.edu}
\author{Marco Radeschi}
\address{Dipartimento di matematica ``G. Peano'', Università degli Studi di Torino, Torino, Italy}
\email{marco.radeschi@unito.it}
% ABSTRACT

\begin{abstract}
We prove that there are only finitely many isoparametrically foliated closed connected Riemannian manifolds with bounded geometry, fixed dimension $n\neq5$, and finite fundamental group, up to foliated diffeomorphism. In addition, we construct various infinite families of isoparametric foliations that are mutually not foliated diffeomorphic, for instance on a fixed sphere. \end{abstract}

\maketitle

%-----------------------------------------------------------------------------
% END FRONT MATTER ----------------------------------------------------------
%-----------------------------------------------------------------------------

%-----------------------------------------------------------------------------
%	MAIN MATTER	-----------------------------------------------------------------------------
%-----------------------------------------------------------------------------

\section{Introduction}
An \emph{isoparametric function} on a complete Riemannian manifold $(M,g)$ is a nonconstant smooth function $f:M\to \RR$ that satisfies 
\begin{equation} \label{eq: 1}
\|\nabla f\|^2=b(f) \quad\text{and}\quad  \Delta f=a(f)\,.
\end{equation}
for a smooth function $b: f(M)\to \RR$ and a continuous function $a:f(M)\to \RR$. Geometrically, the first equation says that the level sets---which are smooth submanifolds by \cite{Wan}---are equidistant and the second says that they have constant mean curvature. An \emph{isoparametric foliation} $\fol$ on $M$ is a partition of $M$ into the connected components of the level sets of an isoparametric function on $M$. Initiated by work Levi-Civita \cite{LC37} and Cartan \cite{Car38, Car39} in the case of space forms, these types of foliations were studied from different angles, especially for round spheres  (see e.g.\ \cite{Mun80, Mun81, FKM, Abr83,  DN85, Sto, Imm08, Miy, Sif16,  Chi}). Classification efforts were made in the context of symmetric spaces (see e.g.\ \cite{TT95, Kol02, DV16, DG18}) and questions on relations between the topology of $M$ and the existence of isoparametric foliations for some Riemannian metric on $M$  were investigated  (see e.g.\,\cite{GH87, QT15, GT13, Ge16, GR15}).

In this work, we establish the following global finiteness result for Riemannian manifolds $(M,g)$ with an isoparametric foliation $\fol$ in the above sense, up to \emph{foliated diffeomorphism}, i.e.\,diffeomorphism that preserves the decomposition into leaves.

\begin{theorem}\label{main-theorem}
For a fixed dimension $n\neq 5$ and constants $\kappa, \nu, r>0$, the class of isoparametrically foliated closed connected Riemannian manifolds $(M, g, \fol)$ with \[\dim (M)=n,\ \ 
|\pi_1(M)|<\infty,\ \  |\operatorname{sec}(M,g)|<  \kappa,\ \ \operatorname{vol}(M,g)> \nu,\ \  \operatorname{diam}(M,g)<r
\]
contains finitely many foliated diffeomorphism types. In dimension $n=5$, it contains only finitely many foliated homeomorphism types.
\end{theorem}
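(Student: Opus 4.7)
The plan is to combine Cheeger--Gromov precompactness of the ambient Riemannian manifolds with a stability statement for isoparametric foliations under smooth convergence of the underlying metrics. First, Cheeger's finiteness theorem applied with the bounds on $|\sec|$, $\vol$ and $\diam$ shows that the underlying manifolds $M$ fall into only finitely many diffeomorphism types (homeomorphism types when $n=4$). After passing to subsequences, it therefore suffices to fix a closed smooth manifold $M$ of dimension $n$ and prove that the set of foliated diffeomorphism classes $[(g,\fol)]$ realized on $M$ under the hypotheses is finite. Assuming for contradiction an infinite sequence $(g_i,\fol_i)$ on $M$ of pairwise non-foliated-diffeomorphic examples, Cheeger--Gromov convergence applied after pulling back by suitable diffeomorphisms yields $g_i\to g_\infty$ in $C^{1,\alpha}$ for some $C^{1,\alpha}$ Riemannian metric $g_\infty$ on $M$.

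Next I would control the isoparametric data along the sequence. Let $f_i:M\to[0,1]$ be the isoparametric function defining $\fol_i$, normalized so that $\min f_i=0$ and $\max f_i=1$; the corresponding functions $a_i,b_i$ in the identities $\|\nabla f_i\|^2=b_i(f_i)$ and $\Delta_{g_i} f_i=a_i(f_i)$ then admit uniform bounds from the geometric hypotheses. Elliptic bootstrapping, using that isoparametric functions on closed manifolds are smooth, yields uniform $C^k$-bounds on the $f_i$ for every $k$. A further subsequence therefore converges $f_i\to f_\infty$ in $C^\infty$, and the limit is an isoparametric function on $(M,g_\infty)$; let $\fol_\infty$ be the corresponding foliation.

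The heart of the argument is the stability statement: for $i$ sufficiently large, smooth convergence $(g_i,f_i)\to(g_\infty,f_\infty)$ forces $\fol_i$ to be foliated diffeomorphic to $\fol_\infty$, contradicting the choice of the sequence. On the open dense stratum of regular leaves this reduces to a Moser-type isotopy straightening a family of nearby equidistant hypersurface foliations. The delicate point is near a focal (singular) submanifold $B_\infty$ of $\fol_\infty$: there $\fol_\infty$ is encoded by the normal bundle of $B_\infty$ together with an isoparametric slice foliation on each unit normal sphere, and one must produce a diffeomorphism of tubes intertwining these transverse slice foliations. The discrete invariants of the slice (the number of singular leaves and the multiplicities) are locally constant under smooth convergence of $(g_i,f_i)$, and the rigidity of isoparametric foliations on round spheres supplies the required identification of the transverse data. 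Gluing this with the Moser straightening on the regular stratum yields the global foliated diffeomorphism $\fol_i\cong\fol_\infty$.

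The principal obstacle I anticipate is precisely this last step: matching the transverse slice data across a focal submanifold and upgrading a pointwise matching into a smooth equivalence of tubes. This is also where the hypothesis $n\neq 5$ enters: when $\dim M=5$ the regular leaves are four-manifolds and the smoothing step in the transverse direction is obstructed by the failure of the smoothing theorem in dimension four, so the same strategy only produces a foliated homeomorphism, which accounts for the weaker conclusion stated in the exceptional case.
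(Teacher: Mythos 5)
Your proposal has a genuine gap at its foundation: nowhere do you use the hypothesis $|\pi_1(M)|<\infty$, and without it the statement is false. The paper's \cref{T:counter-3} exhibits a \emph{fixed} closed Riemannian manifold (so all geometric bounds hold trivially) carrying infinitely many isoparametric foliations that are pairwise not foliated homeomorphic; these come from Riemannian submersions onto circles of length $\tfrac1m$, so the transverse direction collapses, and after your normalization $\min f_i=0$, $\max f_i=1$ the gradients $\|\nabla f_i\|_{g_i}$ blow up. Hence your assertion that the functions $a_i,b_i$ ``admit uniform bounds from the geometric hypotheses'' is unjustified and cannot hold in general (note also the reparametrization freedom $f\mapsto h\circ f$, which your normalization does not fix). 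The missing ingredient is the existence of a singular leaf, which is where finiteness of $\pi_1$ enters (\cref{lem:sing-fib}), together with a quantitative non-collapsing estimate: in the paper, \cref{P:int-bound} bounds the length of the base interval of the associated submetry from below by $\pi/(4n\sqrt{\kappa})$, by counting focal points of the regular leaf along horizontal geodesics and comparing with Rauch. Without such an input there is no uniform transverse control and no meaningful limit foliation.

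Second, even granting non-collapse, the regularity you invoke is unavailable: Cheeger--Gromov compactness yields only $\mathcal{C}^{1}$ (or $\mathcal{C}^{1,\alpha}$) convergence of the metrics, so $\Delta_{g_i}$ has merely H\"older coefficients and no elliptic bootstrap can produce uniform $\mathcal{C}^k$ bounds on $f_i$ for all $k$, let alone $\mathcal{C}^\infty$ convergence; moreover the limit exponential map is only $\mathcal{C}^0$. This is precisely why the paper encodes each foliation as a double disc bundle glued along normal exponential maps, shows the gluing diffeomorphisms converge uniformly to a \emph{homeomorphism}, concludes eventual topological isotopy via local contractibility of homeomorphism groups, and only then upgrades to a smooth statement using that the kernel of $\pi_0(\Diff(L^0))\to\pi_0(\Homeo(L^0))$ is finite for the $(n-1)$-dimensional regular leaf when $n\neq5$ (\cref{L:black-magic}; this fails for $4$-manifolds by Quinn--Ruberman). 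Your account of the exceptional dimension is therefore misplaced: if your claimed $\mathcal{C}^\infty$ convergence and Moser straightening worked, no dimension restriction would arise at all; the restriction enters exactly in the purely topological upgrade from topological to smooth isotopy of the gluing map, not in a smoothing step near focal submanifolds. (Minor point: Cheeger--Peters finiteness gives \emph{diffeomorphism} finiteness of the underlying manifolds in all dimensions, including $n=4$, so your weakening there is unnecessary.)
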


\begin{remark}\label{remark:first}\ 
\begin{enumerate}[leftmargin=0.65cm]
\item \cref{main-theorem} may be viewed as a foliated version of Cheeger's finiteness theorem \cite{Cheeger,Peters2}. Related results were obtained in \cite{Tapp}  for submersions and in \cite{Harvey} for non-collapsed isometric group actions. Note that Cheeger's result implies that there are only finitely many diffeomorphism classes of manifolds $M$ as in \cref{main-theorem}, so our result can be phrased equivalently as a finiteness result for isoparametric foliations on a \emph{fixed} smooth manifold $M$.
\item The reason why dimension $n=5$ in \cref{main-theorem} is special is that our proof relies on the fact that for a closed $(n-1)$-manifold $L$ the forgetful morphism $\pi_0(\Diff(L))\to \pi_0(\Homeo(L))$ has finite kernel for $n\neq 5$ (see \cref{L:black-magic}). Here $\pi_0(\Diff(L))$ denotes the group of smooth isotopy classes of diffeomorphisms and $ \pi_0(\Homeo(L))$ the group of topological isotopy classes of homeomorphisms. The analogous statement for $n=5$ is known to be false (see \cref{R:when-fail}).
\item Our proof of \cref{main-theorem} goes through more generally for \emph{transnormal functions} which are functions $f:M\to \RR$ that only satisfy the first equation in \eqref{eq: 1}. 
\item We focus on isoparametric foliations of codimension one foliations as defined above in this work, but there are also analogues in higher codimension; see for example \cite{TT95, Lyt14, Tho91}.
\end{enumerate}
\end{remark}
 
\subsection*{On the assumptions of \cref{main-theorem}}
To illustrate the assumptions in \cref{main-theorem}, we construct various infinite families of isoparametrically foliated Riemannian manifolds. For instance, if the boundedness assumptions on the geometry in  \cref{main-theorem} are dropped, then there are infinite families of mutually inequivalent isoparametric foliations, even on a fixed sphere:

\begin{theorem}\label{T:counter-1}
On $M=\sphere^n$ with $n\ge5$, there are up to foliated diffeomorphism infinitely many isoparametric foliations $(M,g,\fol)$ with singular leaves $\{0\}\times \sphere^1, \sphere^{n-2}\times\{0\}\subset \sphere^n$ and the Clifford torus $\smash{\sphere^1(\tfrac{1}{\sqrt 2})\times \sphere^{n-2}(\tfrac{1}{\sqrt 2})\subset \sphere^n}$ as a regular leaf.
\end{theorem}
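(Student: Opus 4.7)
My plan is to produce the infinite family by a regluing construction: cut the standard isoparametric Clifford foliation along the Clifford torus and re-glue by a carefully chosen sequence of diffeomorphisms, then show these give pairwise non-equivalent foliations.

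I begin with the standard Clifford foliation $(\sphere^n, g_0, \fol_0)$ arising from the cohomogeneity-one $\SO(2) \times \SO(n-1)$-action on $\RR^2\oplus\RR^{n-1}$. This gives a decomposition $\sphere^n = U_- \cup_T U_+$, where $U_- \cong \sphere^1 \times D^{n-1}$ and $U_+ \cong D^2 \times \sphere^{n-2}$ are closed tubular neighborhoods of the two singular leaves $\{0\}\times\sphere^1$ and $\sphere^{n-2}\times\{0\}$, and $T \cong \sphere^1 \times \sphere^{n-2}$ is the Clifford torus. On each piece, the foliation is by concentric fibre spheres, and each carries a warped-product cohomogeneity-one metric making the local foliation isoparametric.

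Given any diffeomorphism $\phi \colon T \to T$ that admits an extension $\tilde\phi$ to a self-diffeomorphism of $U_-$, I form $M_\phi := U_- \cup_\phi U_+$. Then $\tilde\phi \sqcup \id_{U_+}$ is a diffeomorphism $M_\phi \xrightarrow{\sim} \sphere^n$, and the foliations on the two sides glue across the common leaf $T$ to give a foliation $\fol_\phi$ on $\sphere^n$ with the prescribed singular leaves and Clifford torus. Transported via this identification, $\fol_\phi$ is isoparametric for the metric obtained by gluing $\tilde\phi^* g_0|_{U_-}$ and $g_0|_{U_+}$ with a small smoothing in a collar of $T$: the distance functions to the two singular leaves agree on $T$ and combine into a smooth isoparametric function on $\sphere^n$. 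A concrete infinite supply of such $\phi$ comes from loops $\alpha \colon \sphere^1 \to \SO(n-1)$ via $\phi_\alpha(\theta, x) := (\theta, \alpha(\theta) x)$, which extend to $U_-$ by applying $\alpha(\theta)$ linearly on each disk fibre; more generally, one may take any loop $\alpha\in\Map(\sphere^1,\Diff(\sphere^{n-2}))$ that extends to $\Map(\sphere^1,\Diff(D^{n-1}))$.

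The main obstacle is to verify that infinitely many of these foliations are pairwise non-foliated-diffeomorphic. Unwinding definitions, $\fol_\phi$ and $\fol_{\phi'}$ are foliated diffeomorphic if and only if $\phi' = a \circ \phi \circ b^{-1}$, up to isotopy, for boundary restrictions $a, b \in \Diff(T)$ of foliation-preserving self-diffeomorphisms of $U_\mp$; each such restriction is forced to respect the ``collapse'' projection $T \to \sphere^1$ (from the $U_-$ side) or $T \to \sphere^{n-2}$ (from the $U_+$ side), since the leaves in $U_\pm$ collapse onto the singular orbit along those projections. The invariant of $\fol_\phi$ thus lands in the double coset space $G_- \backslash \pi_0 \Diff(T) / G_+$, where $G_\pm \subseteq \pi_0 \Diff(T)$ is the subgroup of classes induced by bundle automorphisms of the corresponding projection. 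For $n \geq 5$ I expect to show this quotient is infinite, for instance by ``spinning'' suitably nontrivial higher-homotopy classes of $\Diff(\sphere^{n-2})$ around the $\sphere^1$ factor to produce twists that by construction respect neither collapse projection; identifying an explicit such infinite family is the technical core of the argument.
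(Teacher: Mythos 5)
Your framework runs parallel to the paper's (same decomposition of $\sphere^n$ into the two trivial disc bundles over $\sphere^1$ and $\sphere^{n-2}$, regluing along the Clifford torus, and the double-coset criterion of \cref{P:fol-diff}), but the heart of the theorem --- exhibiting infinitely many gluings that are pairwise inequivalent modulo the double-coset relation while still producing $\sphere^n$ with the prescribed leaves --- is exactly the part you leave open, and your concrete candidates fail. The diffeomorphisms $\phi_\alpha(\theta,x)=(\theta,\alpha(\theta)x)$ with $\alpha\colon\sphere^1\to\SO(n-1)$ are restrictions to $\partial(\sphere^1\times\DD^{n-1})$ of fiberwise \emph{isometries} of the disc bundle $\sphere^1\times\DD^{n-1}\to\sphere^1$; hence their classes lie in the subgroup $H_-$ (your $G_-$), so every $\fol_{\phi_\alpha}$ is foliated diffeomorphic to the standard Clifford foliation and the family collapses to a single equivalence class. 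The proposed upgrade --- ``spinning'' elements of $\pi_1(\Diff(\sphere^{n-2}))$ around the $\sphere^1$ factor --- cannot work at the borderline case $n=5$ of the theorem: $\pi_1(\Diff(\sphere^3))\cong\pi_1(\OO(4))$ is finite by Hatcher's theorem, so spun loops give only finitely many isotopy classes, and those coming from $\SO(4)$ again land in $H_-$. The paper's infinite supply instead comes from a genuinely different and deep input (\cref{lem:diffeos1sn}): for $n\ge 5$ there are infinitely many diffeomorphisms of $\sphere^1\times\sphere^{n-2}$ that are \emph{concordant but not isotopic} to the identity (Hsiang--Sharpe, Hatcher--Wagoner, and Budney--Gabai for $n=5$). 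Concordance to the identity also replaces your more restrictive requirement that $\phi$ extend over $U_-$: by \cref{P:foliated}\ref{equ:foliated-concordance} it guarantees a diffeomorphism of the reglued manifold with $\sphere^n$ carrying the two zero sections and $\partial D^\pm$ to the standard singular leaves and the Clifford torus.

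Two further points you do not address. First, to convert ``infinitely many isotopy classes of gluings'' into ``infinitely many double cosets'' one must control the groups $H_\pm$; the paper proves they are finite via the fibrations $\Map(\sphere^{n-2},\OO(2))\to\operatorname{Isom}_b(D^+)\to\Diff(\sphere^{n-2})$ and $\Map(\sphere^1,\OO(n-1))\to\operatorname{Isom}_b(D^-)\to\Diff(\sphere^1)$ together with finiteness of $\pi_0(\Diff(\sphere^m))$ for $m\neq4$ (\cref{lem:finitenessdiffeosphere}), with a separate argument at $n=6$ where $\pi_0(\Diff(\sphere^4))$ is unknown (\cref{lem:imagefinitenessdiffeo4sphere}); without some such finiteness (or at least control of the $H_\pm$-actions) distinct isotopy classes need not give distinct foliations. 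Second, your passage from the glued decomposition to an actual isoparametric function (``small smoothing in a collar of $T$'') is only sketched; the paper handles this cleanly through the equivalence of double disc bundle decompositions with transnormal functions (\cref{lem:equivalent-chara}) and the upgrade from transnormal to isoparametric (\cref{cor:isoparam-char}), which is a real step rather than a smoothing remark. So the proposal, as it stands, has a genuine gap precisely at the existence of the infinite family, and its suggested route for filling it does not suffice.
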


\begin{remark}\ 
\begin{enumerate}[leftmargin=0.65cm]
\item A leaf is \emph{singular} if it has codimension $>1$ and \emph{regular} if it has codimension $1$.
\item The bound on $n$ in \cref{T:counter-1} is sharp: by \cite{GR15} there are only finitely foliated diffeomorphism types of isoparametric foliations on simply connected closed $4$-manifolds. The analogous statement in lower dimensions holds too.
\end{enumerate}
\end{remark}

\noindent {There are also infinite families of inequivalent isoparametric foliations on a fixed manifold $M$ that can be distinguished by their singular leaves alone:

\begin{theorem}\label{T:counter-2}
On $M=\sphere^3\times \sphere^4\times \sphere^5$ there are infinitely many isoparametric foliations $(M, g_i,\fol_i)$ that are pairwise not foliated diffeomorphic and can be distinguished by the homeomorphism class of their singular leaves.
\end{theorem}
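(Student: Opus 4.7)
The plan is to construct the required family explicitly, via an infinite sequence of Riemannian submersions from $M = S^3\times S^4\times S^5$ onto isoparametrically foliated lower-dimensional bases. The starting point is the general pullback principle: if $\pi\colon (M,g)\to (N,h)$ is a Riemannian submersion with minimal fibers and $f\colon N \to \RR$ is isoparametric with singular leaves $L$, then $f\circ \pi$ is isoparametric on $M$ with associated singular leaves exactly $\pi^{-1}(L)$, since $\|\nabla(f\circ\pi)\|^2 = \|\nabla f\|^2\circ \pi$ and minimality of the fibers implies $\Delta(f\circ\pi) = (\Delta f)\circ \pi$. This applies in particular to a product metric $g = h\oplus k$ on $M = N\times F$ with $\pi$ the first projection, producing singular leaves of the form $L\times F$.

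Applied to $N$ a factor or a product of factors of $M$ equipped with a standard isoparametric foliation (e.g.~the Clifford-type function $f(x,y) = |x|^2 - |y|^2$ on a sphere), this yields a handful of distinct foliations on $M$, but only finitely many, since each sphere $S^3$, $S^4$, $S^5$ carries only finitely many isoparametric foliations up to equivalence. To upgrade this to an infinite family with pairwise non-homeomorphic singular leaves, I would exhibit infinite families $(N_i, h_i)$ of isoparametrically foliated manifolds with pairwise distinct singular-leaf topology, each admitting $M$ as a Riemannian submersion. Two natural sources of such $N_i$ are (a) the Ferus--Karcher--M\"unzner construction of isoparametric foliations on high-dimensional spheres from Clifford systems, whose singular focal varieties are sphere bundles over spheres of varying characteristic class, and (b) cohomogeneity-one Riemannian manifolds whose orbit decomposition tautologically gives an isoparametric foliation. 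In either case, realizing $M$ as a Riemannian submersion over $N_i$ will generally require a non-product metric $g_i$ on $M$, and in fact the geometry of $g_i$ must become unbounded as $i\to\infty$, consistent with such examples necessarily falling outside \cref{main-theorem}.

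The main obstacle is to arrange matters so that the resulting singular leaves $\pi_i^{-1}(L_i)\subset M$ are pairwise non-homeomorphic. This should be checked using elementary topological invariants (such as cohomology rings, fundamental groups, or characteristic numbers) of the $L_i$ and how they assemble into $\pi_i^{-1}(L_i)$; for product-like constructions this reduces to distinguishing manifolds of the form $L_i\times F_i$ for distinct $L_i$, which is accessible via K\"unneth. A secondary challenge is realizing each desired Riemannian submersion $M\to N_i$: this can be done by explicitly presenting $M$ as the total space of the relevant fiber bundle, using the nontriviality of the homotopy groups $\pi_*(\Diff)$ of the sphere factors to produce infinitely many topologically inequivalent bundle structures on the smooth manifold $S^3\times S^4\times S^5$.
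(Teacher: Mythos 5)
Your proposal contains a correct general principle (pulling back an isoparametric function along a Riemannian submersion with minimal fibers yields an isoparametric function; this is exactly the mechanism the paper uses, via [GT13, Proposition 2.4], in the proof of \cref{T:counter-3}), but as a proof of \cref{T:counter-2} it has a genuine gap: the two steps you yourself flag as "obstacles" are the entire content of the theorem, and neither is carried out nor plausibly reducible to the tools you name. First, you never construct the infinite family of submersions $\pi_i\colon \sphere^3\times\sphere^4\times\sphere^5\to N_i$. Since $M$ is a fixed $12$-manifold, the candidate bases have bounded dimension; FKM foliations on a sphere of fixed dimension form only a finite list, and there is no reason $M$ should admit any Riemannian submersion (or even a fiber bundle structure with the required fiber) onto those spheres or onto an infinite family of cohomogeneity-one bases. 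The suggestion to use nontriviality of $\pi_*(\Diff)$ of the sphere factors to produce "infinitely many topologically inequivalent bundle structures" does not help with the actual conclusion: inequivalent bundle structures with the same base foliation change neither the base's singular leaves $L$ nor, in any controlled way, the homeomorphism type of $\pi_i^{-1}(L)$, and you give no invariant computation distinguishing these preimages. Second, even granting the submersions, distinguishing the singular leaves $\pi_i^{-1}(L_i)$ up to homeomorphism is left entirely open; the K\"unneth remark only covers the product case, which you correctly note yields finitely many examples.

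For comparison, the paper's proof is much more specific and does not go through submersions at all. It uses Belegradek's theorem that $N=\sphere^3\times\sphere^4\times\RR^5$ carries nonnegatively curved metrics with infinitely many pairwise non-homeomorphic souls $\Sigma_i$, whose normal disc bundles $\nu_i^{\le1}$ have interior diffeomorphic to $N$; a separate lemma (\cref{lem:interior-unique}: compact manifolds of dimension $\neq 4,5$ with simply connected boundaries and diffeomorphic interiors are diffeomorphic) upgrades this to diffeomorphisms $\nu_i^{\le1}\cong \sphere^3\times\sphere^4\times\DD^5$. Gluing two such disc bundles gives double disc bundle decompositions of $\sphere^3\times\sphere^4\times\sphere^5$ whose zero sections --- the singular leaves --- are the pairwise non-homeomorphic souls $\Sigma_i$, and \cref{cor:isoparam-char} converts these decompositions into isoparametric foliations for suitable metrics. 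If you want to salvage your approach, you would need a comparably strong existence input playing the role of Belegradek's souls; without it, the argument does not get off the ground.
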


\noindent An assumption on the fundamental group in \cref{main-theorem} is necessary as well: there are examples of manifolds $M$ that admit infinitely many mutually not foliated diffeomorphic isoparametric foliations, even with respect to a fixed metric $g$ on $M$:

\begin{theorem} \label{T:counter-3}
For $n\geq 4$, there exists a closed Riemannian $n$-manifold $(M,g)$ and infinitely many isoparametric foliations $\fol_i$ on $(M,g)$, such that the leaves of $\fol_i$ for fixed $i$ are all diffeomorphic, but are mutually not homeomorphic for different $i$.
\end{theorem}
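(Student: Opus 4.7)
The plan is to realize $(M,g)$ as a closed Riemannian manifold admitting infinitely many Riemannian submersions $\pi_i \colon M \to S^1$ that are harmonic and have pairwise non-homeomorphic fibers. For each such $\pi_i$, the function $f_i \coloneqq \cos(2\pi\pi_i)$ is isoparametric: since $\pi_i$ has constant gradient norm and vanishing Laplacian, $\|\nabla f_i\|^2 = 4\pi^2 \|\nabla\pi_i\|^2 (1 - f_i^2)$ and $\Delta f_i = -4\pi^2 \|\nabla\pi_i\|^2 f_i$ are both functions of $f_i$. Each regular level set of $f_i$ is a disjoint union of two copies of the fiber of $\pi_i$, while each of the two singular level sets $\{f_i = \pm 1\}$ consists of a single copy. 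Thus every leaf of the associated foliation $\fol_i$ is diffeomorphic to the fiber, and varying $i$ yields foliations whose leaves are pairwise not homeomorphic.

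For the construction I would take $M = L \times S^1$ with product metric $g = g_L + d\theta^2$, where $L$ is a compact locally homogeneous Riemannian $(n-1)$-manifold; the homogeneity of $g_L$ ensures that every harmonic $1$-form on $L$ has constant length. For each such form $\alpha$ representing a primitive class in $H^1(L;\ZZ)$ and each coprime pair $(k,m) \in \ZZ^2$ with $k \ne 0$, the map $\pi_{k,m,\alpha}(x,\theta) \coloneqq k\theta + m\alpha(x) \pmod 1$ is likewise harmonic and has constant gradient norm. Its fiber is the $|k|$-fold cyclic cover of $L$ classified by the homomorphism $-m\alpha_* \bmod k \colon \pi_1(L) \to \ZZ/k$; varying $(k,m,\alpha)$ therefore gives infinitely many candidate Riemannian submersions.

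The main obstacle is to ensure that infinitely many of these cyclic covers are pairwise non-homeomorphic: for a flat torus $L$ all such covers remain tori and the construction collapses, so one needs $L$ with richer geometry. A concrete choice is $L = N \times T^r$, where $N$ is a closed orientable three-dimensional Sol-manifold given as the mapping torus of an Anosov automorphism $A$ of $T^2$, and $r = n-4 \ge 0$. The generator of $H^1(N;\ZZ) = \ZZ$ is represented by the pullback of $d\theta$ from the mapping torus structure, and the corresponding $k$-fold cyclic cover of $N$ is the mapping torus of $A^k$. These mapping tori have pairwise non-conjugate monodromies (since the traces $\tr(A^k)$ are pairwise distinct), so by the classification of geometric three-manifolds they are pairwise non-homeomorphic. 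Combining with the torus factor $T^r$ and with the $S^1$-factor of $M$ via the characters $\pi_{k,m,\alpha}$, one obtains infinitely many pairwise non-homeomorphic fibers and hence the desired isoparametric foliations $\fol_i$ on the fixed $(M,g)$, which is realizable in every dimension $n \ge 4$.
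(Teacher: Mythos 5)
Your construction is correct in substance and is structurally a close cousin of the paper's proof of \cref{T:counter-3}, implemented with different ingredients. The paper also fixes a locally homogeneous metric on a product (aspherical $3$-manifold)$\times\sphere^1$ --- there $M=\sphere^1\times \mathcal H/H_1$ with $\mathcal H$ the Heisenberg group --- and also takes $\fol_m$ to be the fibers of a circle-valued Riemannian submersion that wraps $m$ times around the $\sphere^1$-factor while twisting along a fixed degree-one class, so that the leaves are the $m$-fold cyclic covers $\mathcal H/H_m$ of the nilmanifold, distinguished by their fundamental groups (abelianization $\ZZ^2\oplus\ZZ/m$). Where you differ is in the model geometry and in how isoparametricity is certified: the paper shows the fibers have constant mean curvature by homogeneity, deduces minimality from the fiber of least volume, and then quotes \cite[Proposition 2.4]{GT13} to compose the submersion with an isoparametric function on the base circle; you instead observe directly that $\cos(2\pi\pi_i)$ is isoparametric whenever $\pi_i$ is a harmonic circle-valued map of constant gradient norm, which is a pleasant self-contained alternative. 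You also replace the Heisenberg nilmanifold by a Sol mapping torus times a torus and distinguish the cyclic covers via the monodromies $A^k$. One trade-off: the paper's example is a nilmanifold, so the metric can be taken almost flat (as the remark following the theorem exploits), whereas your Sol-based example cannot be; the theorem as stated does not require this, so that is not a defect.

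Two points in your write-up need patching, though neither is fatal. First, the blanket claim that local homogeneity of $g_L$ forces every harmonic $1$-form to have constant length is false (harmonic $1$-forms on hyperbolic surfaces have zeros); it is true for globally homogeneous metrics, and in your example it holds for the concrete reason that, for the Sol metric on $N$, the harmonic representative of the generator of $H^1(N;\ZZ)$ is a constant multiple of the left-invariant form dual to the fibration direction, and harmonic $1$-forms on a Riemannian product are sums of pullbacks of harmonic forms from the factors --- so verify this directly rather than invoking the general principle. Second, you pass from ``the $N_k$ are pairwise non-homeomorphic'' to ``the fibers $N_k\times T^r$ are pairwise non-homeomorphic'' without argument; this needs a word, and the quickest one is that the torsion subgroup of $H_1(N_k\times T^r)\cong H_1(N_k)\oplus\ZZ^{r}$ has order $|\det(A^k-\id)|=|\tr(A^k)-2|$, which is strictly increasing in $k$ for $A$ with $\tr(A)>2$ --- an argument that also distinguishes the $N_k$ themselves without appealing to the classification of geometric $3$-manifolds.
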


\begin{remark}\ 
\begin{enumerate}[leftmargin=0.65cm]
\item The proof of \cref{T:counter-3} shows that the metric $g$ be chosen to be \emph{almost flat}, thus  with arbitrary small curvature and diameter.
\item All leaves of the foliations $\fol_i$ in \cref{T:counter-3} are regular, i.e.\,of codimension $1$. This is in fact the the only place, where the assumption on the fundamental group in \cref{main-theorem} is needed: the result remains valid if the assumption  on $\pi_1(M)$ is replaced by requiring that $\fol$ has at least one singular leaf.\end{enumerate}
\end{remark}

\subsection*{A conjectural generalization}
Our proof of \cref{main-theorem} in particular verifies the following conjecture for $\dim (X)=1$ (see  \cref{S:prelim} for the relevant definitions).

\begin{conjecture}
For a fixed dimension $n\ge0 $ and constants $\kappa, \nu_1, \nu_2, r  >0$, the class of manifold submetries $P:(M,g)\to X$ where $X$ is an Alexandrov space of volume at least $\nu _2$ and $(M,g)$ is a closed connected Riemannian manifold with
\[\dim (M)=n,\ \  
|\operatorname{sec}(M,g)|<  \kappa,\ \  \operatorname{vol}(M,g)> \nu_1,\ \  \operatorname{diam}(M,g)<r
\]
contains finitely many foliated homeomorphism types.
\end{conjecture}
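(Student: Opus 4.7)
The plan is to mimic the proof of \cref{main-theorem}, which handles the case $\dim X = 1$, replacing the specific analysis of codimension-one foliations by Alexandrov-geometric analogues. The overall pattern is precompactness plus stability, and the only genuinely new input needed is a stratified stability theorem for manifold submetries.

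First, by the Cheeger-Gromov compactness theorem, any sequence $(M_i, g_i, P_i, X_i)$ satisfying the hypotheses has a subsequence together with diffeomorphisms $\phi_i \colon M \to M_i$, where $M$ is a fixed smooth manifold, under which $\phi_i^* g_i$ converges in $C^{1,\alpha}$ to a Riemannian metric $g_\infty$ on $M$. Simultaneously, the submetry property together with O'Neill-type comparison passes a lower sectional curvature bound from $(M_i,g_i)$ to $X_i$, so the $X_i$ form a non-collapsing sequence of Alexandrov spaces of dimension at most $n$ and volume bounded below by $\nu_2$, and Perelman's stability theorem yields homeomorphisms $\psi_i \colon X_i \to X_\infty$ to a limit Alexandrov space $X_\infty$ that are $\varepsilon_i$-Gromov-Hausdorff approximations with $\varepsilon_i \to 0$. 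After extracting once more, the maps $\psi_i \circ P_i \circ \phi_i$ converge uniformly to a $1$-Lipschitz map $P_\infty \colon (M,g_\infty) \to X_\infty$, which is a submetry since uniform limits of submetries are submetries, and indeed a manifold submetry because of the simultaneous volume non-collapse of $M$ and $X_\infty$.

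The main obstacle is a \emph{foliated stability theorem}: for large $i$ there should exist a self-homeomorphism $F_i \colon M \to M$ sending the leaves of $P_i \circ \phi_i$ to the leaves of $P_\infty$, descending to a homeomorphism of the bases that is $\varepsilon_i'$-close to $\psi_i$ with $\varepsilon_i' \to 0$. Granting this, any infinite family of pairwise non-foliated-homeomorphic submetries satisfying the hypotheses would produce a convergent subsequence whose tail would contradict the theorem, yielding the desired finiteness. The stability theorem itself should be proved stratum by stratum in $X_\infty$: on the top stratum the submetries restrict to $C^0$-close Riemannian submersions and the methods of Tapp \cite{Tapp} provide fiberwise homeomorphisms there, while the extension across singular strata requires an equivariant refinement of Perelman's stability theorem, compatible with the submetry structure. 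This equivariant refinement is the technical heart of the conjecture, and is precisely what the $\dim X = 1$ case sidesteps, since there the base is a compact interval whose stratification is trivial.
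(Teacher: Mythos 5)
The statement you are proving is stated in the paper as a \emph{conjecture}: the authors do not prove it, and they explicitly say that their argument verifies it only when $\dim(X)=1$ (that case is \cref{main-theorem}/\cref{T:true-main}). Your proposal does not close this gap either: after the soft precompactness step you reduce everything to a ``foliated stability theorem'' --- a leaf-preserving homeomorphism $F_i\colon M\to M$ carrying the fibers of $P_i\circ\phi_i$ to those of $P_\infty$ for large $i$ --- and you yourself describe the stratified/equivariant refinement of Perelman stability needed for this as ``the technical heart of the conjecture.'' That heart is exactly what is open; no such theorem exists in the literature (Perelman's stability theorem \cite{Kap} only controls the bases, as the paper remarks), and Tapp's results \cite{Tapp} concern bounded Riemannian submersions and do not produce fiberwise homeomorphisms between $C^0$-close submetries, let alone ones that extend across singular strata. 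So the proposal is a plausible reduction, not a proof.

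It is also worth noting that even the intermediate steps are more delicate than your sketch suggests, as the paper's treatment of the $\dim(X)=1$ case shows. The limit metric is only $\mathcal{C}^{1,1}$, its exponential map only $\mathcal{C}^0$, and the limit of the $P_i$ (via \cite[Lemma 2.4]{KL22}) has fibers that are a priori only $\mathcal{C}^1$-submanifolds \cite[Corollary 12.6]{KL22}; your claim that the limit is again a manifold submetry ``because of volume non-collapse'' is unsubstantiated. Moreover, in the interval case the passage from convergence to an actual foliated homeomorphism is not a formal stability argument: the paper must first align the singular leaves by $\mathcal{C}^1$-diffeomorphisms (\cref{prop:aux-prop}), rewrite everything as double disc bundles with a fixed bundle and varying gluing maps, and then invoke \v{C}ernavskii's local contractibility of homeomorphism groups \cite{Cern69} together with \cref{P:foliated} to conclude that nearby gluing maps give foliated homeomorphic manifolds. (One structural difference from \cref{main-theorem} that your setup does capture correctly: the hypothesis $\operatorname{vol}(X)\ge\nu_2$ replaces the role of \cref{P:int-bound}, and since only homeomorphism types are asked for, the smoothing input \cref{L:black-magic} is not needed.) None of these ingredients has a known analogue for higher-dimensional, stratified bases, which is why the statement remains a conjecture.
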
 

\begin{remark}Perelman's Stability Theorem \cite{Kap} implies that in the situation of the conjecture, there are only finitely many base spaces $X$ up to homeomorphism.\end{remark}
\subsection*{Sketch of proofs}We end the introduction by outlining the proofs of our results. 

There are two alternative perspectives on isoparametric foliations on a manifold $(M,g)$: firstly, as the decomposition into the fibers of a \emph{manifold submetry} $P:M\to [c,d]$ to an interval of positive length, and secondly as decompositions of $M$ into two Euclidean disc bundles glued along a diffeomorphisms of their boundary defined in terms of the exponential map of $M$; the foliation is given by the two zero sections and all concentric sphere bundles (see \cref{S:prelim}).

The proof of our main \cref{main-theorem} relies on both of them: the representation as a  manifold submetry $P:M\to [c,d]$ has the advantage that it allows, as $P$ is 1-Lipschitz, for compactness arguments with respect to the Gromov--Hausdorff convergence. The assumption on $\pi_1 (M)$ implies the existence of a singular fiber, and this in turn implies the existence of focal points for regular fibers. A first argument relates the number of focal points within a certain distance both to the length of the base, and to the curvature bounds of the ambient manifold. This implies a uniform lower bound on the length of the interval $[c,d]$ which in turn provides control for a limit submetry of a sequence of manifold submetries $P_i:M_i\to [c_i,d_i]$.
Using this, we argue that in the decompositions of $M_i$ into two Euclidean disc bundles induced by the submetry $P_i:M_i \to [c_i,d_i]$, the disc bundles can be chosen to be \emph{the same} for all $i$, so one is reduced to studying the associated sequence of gluing diffeomorphisms defined in terms of exponential maps. The exponential maps on $M_i$ converge pointwise to the exponential map of a limit manifold, and using this one argues that the associated gluing diffeomorphisms for the disc bundles converge to a homeomorphism. Local contractibility of the group of homeomorphisms of closed manifolds then implies that this sequence is eventually constant up to topological isotopy which implies that the $M_i$ are foliated homeomorphic for large $i$. The upgrade to foliated diffeomorphism is then purely topological. It relies on the fact that there are only finitely many smooth isotopy classes of diffeomorphisms of a closed manifold within a given topological isotopy class.

The foliations in \cref{T:counter-1,T:counter-2,T:counter-3} are constructed as follows: \cref{T:counter-1} is proved by separating $\sphere^n$ as the union of two trivial disc bundles, one over $ \sphere^1$ and one over $ \sphere^{n-2}$, intersecting in a  generalized Clifford torus $\sphere^{n-2} \times \sphere^1$, and gluing them back together along an infinite family of diffeomorphisms of $\mathbb S^{n-2} \times \mathbb S^1$ that are concordant but not isotopic to each other---the existence of such diffeomorphisms is a nontrivial but known result in differential topology. For \cref{T:counter-2}, we argue that $\sphere^3\times \sphere^4 \times \sphere^5$ can be written as the double of total spaces of disc bundles over infinitely many different manifolds. The existence of such bundles has previously been observed in the study of souls of non-negatively curved manifolds. Finally, \cref{T:counter-3} is proved by constructing infinitely many topologically different Riemannian submersions from some nil-manifolds onto circles. 

\subsection*{Acknowledgments}This work originated from discussions following a colloquium talk by MR as part of the Research Training Group DFG 281869850 (RTG 2229), funded by the Deutsche Forschungsgemeinschaft (DFG, German Research Foundation). The authors would like to thank the RTG for providing a fruitful scientific environment. A.L.\ would also like to thank the Hausdorff Research Institute of Mathematics in Bonn where this work was finalised during a stay of A.L.\ as part of the Trimester Program ``Metric Analyis'', funded by the Deutsche Forschungsgemeinschaft (DFG, German Research Foundation) under Germany's Excellence Strategy--EXC-2047/1--390685813.

\section{Preliminaries}\label{S:prelim}
\noindent Throughout this section, we fix a connected closed smooth manifold $M$. When considering submanifolds of $M$, we allow their components to be of different dimensions.

\subsection{Transnormal functions, manifold submetries, and double disc bundles}\label{SS:srf-ms-ddb}A smooth nonconstant function $f:M\to \mathbb R$ on $M$ equipped with a Riemannian metric $g$ is \emph{transnormal} if it satisfies the first condition in \eqref{eq: 1}, i.e.\,$\|\nabla f\|^2=b(f)$ for a smooth function $b\colon f(M)\ra\RR$. By \cite{Wan}, the fibers of a transnormal functions gives a partition of $M$ into smooth submanifolds. We will discuss two alternative points of view on these partitions.

The first alternative point of view is that of \emph{manifold submetries} which are maps $P:M\to X$ from $M$ equipped with a Riemannian metric $g$ to a metric space $X$ that are surjective and have the properties that their fibers $P^{-1}(x)\subseteq M$ are pairwise equidistant smooth submanifolds of $M$ with $d_M(P^{-1}(x),P^{-1}(x'))=d_X(x, x')$ for $x,x'\in X$ (c.f.\ \cite{KL22,MR20,LW}). By \cref{lem:equivalent-chara} below, these are closely related to transnormal functions when the base is an interval $X=[c,d]$ of positive length. In the latter hcase it follows straight from the definition that a manifold submetry $P:M\to [c,d]$ is given as $P(-)=c+d_M(-,P^{-1}(c))=d-d_M(-,P^{-1}(d))$, so in particular $P$ is always 1-Lipschitz. 

The second alternative point of view is that of \emph{double disc bundles}. To introduce it, we say that \emph{Euclidean disc bundle} $D\ra L$ is the unit disc bundle of a vector bundle equipped with a fiberwise metric over a closed manifold $L$ which may be disconnected and have components of different dimension. A \emph{double disc bundle decomposition} of $M$ consists of two Euclidean disc bundles $D^\pm \ra L^\pm$, a diffeomorphism $\varphi\colon \partial D^+\ra \partial D^-$ between the boundaries of their total spaces, and a diffeomorphism $\tau\colon D^+\cup_{\varphi} D^-\ra M$ from the manifold obtained by gluing $D^\pm$ along the boundaries using $\varphi$. Given such decomposition, there is a canonical map $P_{D^\pm}\colon M\ra [-1,1]$ which is given as the composition of $\tau^{-1}$ with the map $D^+\cup_{\varphi} D^-$ that sends $e\in D^\pm$ to $\pm(\lvert e\rvert-1)$, using the fiberwise norm of $D^\pm$. The fibers over $\pm1$ are the $0$-sections $L^\pm\subset M$ and the fibers over $t\in(-1,1)$ are the concentric sphere subbundles of $D^\pm$.

For the purpose of partitions of $M$ into submanifolds, foliations, transnormal functions, manifold submetries to intervals, and double disc bundles are equivalent:
\begin{lemma}\label{lem:equivalent-chara}
Let $M$ be a connected closed manifold $M$ and $\fol$ a partition of $M$ into smooth submanifolds. The following are equivalent:
\begin{enumerate}[leftmargin=0.6cm]
\item\label{enum:chara-i} $\fol$ arises as the nonempty fibers of a transnormal function $f\colon (M,g)\ra \RR$ for some Riemannian metric $g$.
\item\label{enum:chara-ii} $\fol$ arises as the fibers of a manifold submetry $P\colon (M,g)\ra [c,d]$ with $c<d$ for some Riemannian metric $g$.
\item\label{enum:chara-iii} $\fol$ arises as the fibers of the map $P_{D^\pm}\colon M\ra [-1,1]$ induced by a double disc bundle decomposition $(D^\pm,\tau,\varphi)$ of $M$.
\end{enumerate}
Moreover, the metrics $g$ in \ref{enum:chara-i} and \ref{enum:chara-ii} can be chosen to be the same.
\end{lemma}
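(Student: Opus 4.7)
The plan is to establish (i) $\Leftrightarrow$ (ii) with the same metric $g$, then (ii) $\Rightarrow$ (iii), and finally (iii) $\Rightarrow$ (ii) by constructing a new Riemannian metric. For (i) $\Rightarrow$ (ii), I would reparametrize the transnormal function $f$ by the arclength of its gradient flow lines: since $\|\nabla f\|^2 = b(f)$ with $b$ smooth and vanishing to first order at the critical values of $f$, the function $\psi(t) := \int_a^t ds/\sqrt{b(s)}$ is a continuous bijection from $f(M) = [a,b]$ onto a finite interval $[c,d]$, smooth on the interior. Then $P := \psi \circ f$ satisfies $\|\nabla P\|^2 = 1$ on the regular set, so $P$ is $1$-Lipschitz and its gradient flow realizes the distance between regular fibers, making $P$ a manifold submetry with the same fiber decomposition $\fol$. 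Conversely, given $P$ as in (ii), set $f := -\cos(\pi(P-c)/(d-c))$. Since $P$ agrees with the distance to $P^{-1}(c)$ near $P^{-1}(c)$ (and analogously near $P^{-1}(d)$), the square $P^2$ is smooth across the singular fibers, and because $\cos$ is even, $f$ is smooth on all of $M$; a direct computation gives $\|\nabla f\|^2 = (\pi/(d-c))^2 (1 - f^2)$, so $f$ is transnormal with the same fibers as $P$.

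For (ii) $\Rightarrow$ (iii), set $L^\pm := P^{-1}(c), P^{-1}(d)$, which are closed smooth submanifolds of $M$ (possibly disconnected and of mixed dimension). The unit-speed gradient flow of $P$ on $M \setminus (L^+ \cup L^-)$ consists of geodesics orthogonal to the regular fibers, each going from $L^+$ to $L^-$ in total time $d - c$ and realizing the distance. Consequently the normal exponential map $\exp^\nu \colon \nu_{< d-c}(L^+) \to M \setminus L^-$ is a diffeomorphism: distinct normal geodesics from $L^+$ cannot meet in the interior, as this would give two distinct distance-minimizing paths to a common regular fiber. Restricting to the subbundle of radius $(d-c)/2$ and rescaling, I obtain a diffeomorphism from the unit disc bundle $D^+$ of $\nu(L^+)$ onto $P^{-1}([c, (c+d)/2])$, and analogously for $D^-$ onto $P^{-1}([(c+d)/2, d])$. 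These two pieces share the regular fiber $P^{-1}((c+d)/2)$ as common boundary, providing the gluing diffeomorphism $\varphi$ and identifying $P_{D^\pm}$ with $P$ up to affine rescaling.

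For (iii) $\Rightarrow$ (ii), I would equip each $D^\pm$ with a connection-type Riemannian metric: pick a Riemannian metric $h^\pm$ on $L^\pm$, a metric connection on the underlying vector bundle (compatible with the given fiberwise norm), and combine the pullback $\pi^* h^\pm$, the radial term $dr^2$, and a sphere metric on each fiber via the connection splitting. In this metric the radial function $r = |e|$ has unit gradient, so $P_{D^\pm}$ restricts to a manifold submetry $D^+ \to [-1, 0]$ and $D^- \to [0, 1]$. After adjusting the metrics within a collar of the boundary (for instance via a partition of unity interpolation) so that $\varphi$ becomes an isometry of the induced boundary metrics, the two pieces glue to a smooth Riemannian metric $g$ on $M = D^+ \cup_\varphi D^-$ for which $P_{D^\pm} \colon M \to [-1, 1]$ is a manifold submetry. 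The main obstacle I expect is the step (ii) $\Rightarrow$ (iii): showing the normal exponential map of the singular fiber $L^+$ is a diffeomorphism onto $M \setminus L^-$ requires ruling out early focal and cut points inside the tube of radius $d - c$, which ultimately hinges on the uniqueness of distance-minimizing normal geodesics provided by the gradient flow of the submetry $P$.
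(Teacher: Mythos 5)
Your overall architecture is the same as the paper's: for (ii)$\Rightarrow$(iii) you parametrize the two halves $P^{-1}([c,\tfrac{c+d}{2}])$ and $P^{-1}([\tfrac{c+d}{2},d])$ by the normal exponential maps of $L^\pm$, and (i)$\Leftrightarrow$(ii) is a reparametrization in the target (your $-\cos$ plays the role of the paper's function $h$ with prescribed quadratic behaviour near the endpoints, and your arclength integral $\psi$ replaces the paper's softer observation that, by \cite{Wan}, equidistance of the fibers turns $f(M)$ into a metric interval, making $f$ a submetry onto it). Those variants are fine. However, two of the steps you leave open are genuine gaps rather than routine details.

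First, in (ii)$\Rightarrow$(iii) your reason for injectivity of the normal exponential map --- that two normal geodesics meeting at an interior point would give ``two distinct distance-minimizing paths to a common regular fiber'' --- is not by itself a contradiction: a point can perfectly well have several minimizing geodesics to a submanifold (cut points). What actually rules it out is an extra argument: if $\exp^+(v_1)=\exp^+(v_2)=x$ with $|v_1|=|v_2|=P(x)-c<d-c$, extend the second geodesic beyond $x$ until it reaches $L^-$; the concatenation of the first geodesic with this extension is a path from $L^+$ to $L^-$ of length exactly $d-c=d(L^+,L^-)$, hence must be an unbroken minimizing geodesic, forcing the velocities at $x$ to agree and so $v_1=v_2$ (and absence of focal points at radii $<d-c$ follows because each normal geodesic remains minimizing up to length $d-c$). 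You flagged exactly this as your ``main obstacle'' and did not close it; the paper closes it by citing \cite[Section 3.4]{KL22}. Second, in (i)$\Rightarrow$(ii) you assert that $b$ vanishes to first order at the extreme values of $f$, so that $\psi(t)=\int ds/\sqrt{b(s)}$ is finite. This is true but not automatic from the equation $\|\nabla f\|^2=b(f)$ alone (if $b$ vanished to order $\ge 2$ the integral would diverge); it is part of the nontrivial content of Wang's theorem \cite{Wan} --- equivalently, that the finite distance from a regular level to the focal variety equals that integral --- and must be invoked, not assumed. A smaller issue: in (iii)$\Rightarrow$(ii), a naive partition-of-unity interpolation of the two connection metrics need not preserve $\|\nabla r\|=1$ near the seam; one should instead arrange product collars $dr^2+g_\partial$ on both sides with matching boundary data, which is what \cite[Lemma 3.1]{GWY}, cited by the paper, provides.
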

\begin{proof}
We first show that \ref{enum:chara-ii} and \ref{enum:chara-iii} are equivalent. Given a manifold submetry $P\colon (M,g)\ra [c,d]$ as in \ref{enum:chara-ii}, we can without changing the fibers rescale the metrics so that $c=-1$ and $d=1$ (up to an opportune translation of the codomain). Now consider the smooth submanifolds $L^\pm\coloneqq P^{-1}(\pm1)$ and the normal exponential map $\exp^\pm\colon \nu_g(L^\pm)\ra M$ from the normal bundle $\nu_g(L^\pm)\subset TM$ of $L^\pm$ with respect to $g$. Using that $P$ is a submetry and that geodesics in $[-1,1]$ with length $<2$ starting from $\pm1$ minimize distance, one sees that the restrictions $\exp^\pm\colon D_g(L^\pm)\ra M$ to the unit disc bundles $D_g(L^\pm)\subset \nu_g(L^\pm)$  are diffeomorphisms onto $P^{-1}([-1,0])$ and  $P^{-1}([0,1])$ respectively (c.f.\,\cite[Section 3.4]{KL22}). Under this identification, the map $P\colon M\ra [-1,1]$ corresponds to $D_g(L^\pm)\ra [-1,1], e\mapsto \pm(\lvert e\rvert-1)$, so we obtain a double disc bundle composition $\smash{D_g(L^+)\cup_{\varphi} D_g(L^-)\cong M}$ with $\smash{P_{D_g(L^\pm)}=P}$ and gluing diffeomorphism $\varphi= (\exp^-)^{-1}\circ \exp^+$ involving  the diffeomorphism $\smash{\exp^\pm\colon \partial D_g(L^\pm) \ra P^{-1}(0)}$ given by restricting the normal exponential map to the unit sphere bundle. This shows that \ref{enum:chara-ii} implies \ref{enum:chara-iii}. To show the converse, one constructs a suitable metric $g$ starting from a double disc bundle decomposition (see \cite[Lemma 3.1, Remark 3.2]{GWY}).

To prove that  \ref{enum:chara-i} and \ref{enum:chara-ii} are equivalent, we use that the fibers of a transnormal function $f\colon M\ra \RR$ as in \ref{enum:chara-i} are pairwise equidistant smooth submanifolds of $M$ by \cite{Wan}. This implies that there is a canonical metric on $f(M)$ that makes $f\colon M\ra f(M)$ into a manifold submetry. By the mean value theorem, $f(M)$ is homeomorphic to a compact interval and thus also isometric to a compact interval with the standard metric. Postcomposing $f$ with this isometry yields a manifold submetry as in \ref{enum:chara-ii} with the same fibers as $f$. Vice versa, given a manifold submetry $P:M\to [-c,d]$, we may assume $[c,d]=[-1,1]$ by rescaling the metric. We choose a smooth and strictly monotonically inreasing function $h\colon [-1,1]\ra \RR$ with $h(-1)=-1$, $h(1)=1$, $h'(t)>0$ on $(-1,1)$, and such that $h(t)=(t+1)^2-1$ near $-1$ and $h(t)=1-(t-1)^2$ near $1$.  The function $f\coloneq (h\circ P)$ has the same fibers as $P$, so it suffices to check that it is transnormal. As observed above, we have $P(-)=\pm1\mp d_M(-,L^\pm)$. Moreover, we have $(P\circ \exp^{\pm})(v)=\pm1\mp|v|$ for $|v|\leq 2$, so it follows that $f=h\circ P$ is a smooth function. To check that it satisfies the first equation in \eqref{eq: 1}, note that the differential of $f$ vanishes exactly on $L^\pm$. Near $L^{\pm}$ the function $f$ satisfies $\| \nabla f  (\exp^\pm (v))\|^2= \|v\|^2$, which is a linear function of $f$. Outside of $L^{\pm}$,  we have $\| \nabla f\|^2= \|h'  (P(x)) \|^2=\| h' \circ h^{-1}\circ f\|^2$. From this one concludes that $f$ satisfies indeed the first equation in \eqref{eq: 1}, so it is transnormal.\end{proof}

\begin{remark}\label{rem:c-0-bundle-decom}
In the proof of \cref{main-theorem}, we will also rely on a variant of the construction of a double disc bundle decomposition of $M$ out of a manifold submetry $P\colon (M,g)\ra[-1,1]$ from the previous proof where the metric $g$ on the smooth manifold $M$ is only a $\mathcal{C}^1$-metric with a double sided curvature bound in the sense of Alexandrov and the fibers of $P$ are only $\mathcal{C}^1$-submanifolds. In this case, the construction of the double disc bundle decomposition still goes through, but $\tau\colon D(L^+)\cup_{\varphi} D(L^-)\to M$ is now no longer smooth in general and only a homeomorphism, since the exponential map $\exp\colon TM\ra M$ of $g$ is only $\mathcal{C}^0$.
\end{remark}

\subsection{Connectedness of fibers and existence of singular fibers}\label{sec:sing-fib}
Given a manifold submetry $P\colon M\to X$, the base $X$ is an Alexandrov space with curvature bounded below by the lower bound of the sectional curvatures of $M$ and dimension equal to the minimal codimension of the fibers of $P$ (see \cite[Sections 1-3]{KL22}). By \cite[Theorem 12.9]{KL22}, the connected components of the fibers of $P$ are themselves fibers of a manifold submetry $\hat P:M\to \hat X$, which factors $P$ and satisfies $\dim (\hat X)=\dim (X)$. If $\dim(X)=1$, then $X$ is compact, connected $1$-dimensional Alexandrov space, so isometric to an interval or a circle (see \cite[Section 15F]{AKP24}). As the following lemma shows, which of the two it is turns out to be related to whether $P$ has at least one \emph{singular fiber}, i.e.\,a fiber with a component of codimension $\ge 2$. Note that if $X$ is an interval, then no fiber over points in the interior can be singular (this follows for instance from the double disc bundle description in \cref{lem:equivalent-chara}).
\begin{lemma}\label{lem:sing-fib}Fix a manifold submetry $P\colon (M,g)\ra X$ from a connected closed Riemannian manifold $(M,g)$ to an Alexandrov space $X$ with $\dim(X)=1$.
\begin{enumerate}
\item\label{enum:sing-fib-i}  If $P$ has at least one singular fiber, then  $X$ is isometric to an interval.
\item\label{enum:sing-fib-ii} If $M$ has finite fundamental group, then $X$ is isometric to an interval and $P$ has at least one singular fiber. \end{enumerate}
\end{lemma}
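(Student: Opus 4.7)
My plan rests on the fact that a compact connected $1$-dimensional Alexandrov space is isometric to either a closed interval or a circle \cite[Section 15F]{AKP24}, so for both parts the circle case is the one that must be excluded. For \ref{enum:sing-fib-i}, I would argue the contrapositive and assume $X$ is a circle. For any $x\in X$, pick a closed sub-interval $[a,b]\subset X$ containing $x$ in its interior; the restriction $P\colon P^{-1}([a,b])\to [a,b]$ is itself a manifold submetry to a compact interval, so \cref{lem:equivalent-chara} provides a double disc bundle decomposition. Since $x\in (a,b)$, the fiber $P^{-1}(x)$ is a sphere subbundle in that decomposition, hence of codimension one, so it is not singular. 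Thus the existence of a singular fiber forces $X$ to be an interval.

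For \ref{enum:sing-fib-ii}, assume $|\pi_1(M)|<\infty$. I would first rule out $X=S^1$: the argument above makes every fiber of $P$ regular, and piecing together the local double disc bundle descriptions shows that $P\colon M\to S^1$ is a locally trivial fibration with closed fiber $F$. The tail of the homotopy long exact sequence
\[
\pi_1(M)\longrightarrow \pi_1(S^1)=\ZZ \longrightarrow \pi_0(F)\longrightarrow \pi_0(M)=\ast
\]
then forces the finite image of $\pi_1(M)$ in $\ZZ$ to be trivial, so $\ZZ$ would inject into $\pi_0(F)$, contradicting the compactness of $F$. Hence $X$ is an interval.

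It remains to show that $P$ has a singular fiber, for which I would again argue by contradiction. Assuming no singular fibers and replacing $P$ by its connected-fiber factorization $\hat P\colon M\to \hat X$ from \cite[Theorem 12.9]{KL22} preserves the no-singular-fiber property, and the previous paragraph rules out $\hat X=S^1$; so $\hat X=[c',d']$ is an interval and the regular fiber $\hat F$ is connected. By \cref{lem:equivalent-chara}, $M\cong \hat D^+\cup_\varphi \hat D^-$ where $\hat D^\pm$ are rank-one disc bundles over connected codimension-one submanifolds $\hat L^\pm$ and each sphere bundle $\partial \hat D^\pm$ is diffeomorphic to $\hat F$. Connectedness of $\hat F$ forces each cover $\hat F\to \hat L^\pm$ to be a nontrivial two-fold cover. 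Van Kampen applied to $M=\hat D^+\cup \hat D^-$, combined with the deformation retractions $\hat D^\pm\simeq \hat L^\pm$, yields
\[
\pi_1(M)\;\cong\;\pi_1(\hat L^+)*_{\pi_1(\hat F)}\pi_1(\hat L^-),
\]
in which both amalgamating inclusions have index two. Killing $\pi_1(\hat F)$ in each factor produces a surjection $\pi_1(M)\twoheadrightarrow \ZZ/2*\ZZ/2$ onto the infinite dihedral group, contradicting $|\pi_1(M)|<\infty$. The most delicate step of this plan is assembling the global fibration structure in the $X=S^1$ case from purely local double disc bundle information; once that is available, the remaining arguments are routine.
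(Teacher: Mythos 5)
Your treatment of the circle case is where the proposal breaks down. In part \ref{enum:sing-fib-i} you restrict $P$ over a closed subinterval $[a,b]\subset S^1$ and invoke \cref{lem:equivalent-chara} to get a double disc bundle decomposition of $P^{-1}([a,b])$; but that lemma is stated (and its proof, via normal exponential maps of the endpoint fibres inside a closed manifold, only works) for a \emph{closed} manifold $M$, whereas $P^{-1}([a,b])$ is a compact manifold with boundary whose ``endpoint fibres'' are boundary pieces rather than cores of disc bundles. It is not even immediate that the restriction is a manifold submetry for the induced metric, let alone that it carries the product-like structure you need; and the same unproved local statement is used again in \ref{enum:sing-fib-ii} when you ``piece together the local double disc bundle descriptions'' to conclude that $P\colon M\to S^1$ is a locally trivial fibration. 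So the key fact --- that a submetry onto a circle has only regular fibres and is a fibre bundle --- is exactly what remains unproved. The paper obtains it directly: a submetry onto a circle (a $1$-manifold without boundary) is a Riemannian submersion by \cite[Theorem 1.2]{BG00}, smooth by \cite[Theorem 1.1]{LW}, hence a fibre bundle. Alternatively, your idea can be repaired while staying within \cref{lem:equivalent-chara} by a folding trick: for $y\in S^1$ check that $Q:=d_{S^1}(\cdot,y)\circ P\colon M\to[0,\ell]$ is again a manifold submetry from the \emph{closed} manifold $M$ onto an interval, apply the lemma to $Q$ to see that every fibre of $P$ other than the two over $y$ and its antipode is regular, and vary $y$; but some such argument must be supplied.

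The rest of the proposal is sound. Once the fibre bundle structure over $S^1$ is available, your exact-sequence argument ruling out a circle base when $\lvert\pi_1(M)\rvert<\infty$ coincides with the paper's. Your final step is genuinely different from the paper's: instead of citing \cite[Proposition 3.3]{De23} for the nonexistence of a codimension-one double disc bundle decomposition of a manifold with finite fundamental group, you reduce to connected fibres via \cite[Theorem 12.9]{KL22} and argue with van Kampen that, since the boundary double covers $\hat F\to\hat L^\pm$ are nontrivial (index two, as $\hat F$ is connected), $\pi_1(M)$ surjects onto the infinite dihedral group $\ZZ/2 * \ZZ/2$. That argument is correct (index-two subgroups are normal, so the quotient maps to $\ZZ/2$ are compatible on the amalgamated subgroup) and gives a self-contained substitute for the citation; only the circle case described above needs to be fixed.
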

\begin{proof}
As explained above, $X$ is isometric to an interval or to a circle (see \cite[Section 15F]{AKP24}). If it is a circle, then $P$ is a smooth Riemannian submersion by a combination of \cite[Theorem 1.2]{BG00} and \cite[Theorem 1.1]{LW}, so in particular a fiber bundle. Hence all fibers of $P$ are diffeomorphic and not singular. This shows \ref{enum:sing-fib-i}. For \ref{enum:sing-fib-ii}, we first argue that $|\pi_1(M)|<\infty$ implies that $X$ is isometric to an interval. This follows by a similar argument as for \ref{enum:sing-fib-i}: if $X$ is a circle, then $P:M\to S^1$ is a fiber bundle. Since the fibers have finitely many components by compactness, $\mathrm{Im}(P_*)\le \pi_1(S^1)\cong\ZZ$ has finite index, so $\pi_1(M)$ is infinite. Using \ref{lem:equivalent-chara}, in order to prove \ref{enum:sing-fib-ii}, it thus suffices to show that there is no double disc bundle decomposition $M\cong D(L^+)\cup_{\varphi} D(L^-)$ with $L^\pm\subset M$ of codimension $1$ if $M$ has finite fundamental group. This is proved for instance in \cite[Proposition 3.3]{De23}. 
\end{proof}

\begin{corollary}\label{cor:isoparam-char}In the situation of \cref{lem:equivalent-chara}, if the submanifolds in $\fol$ are connected and there is at least one singular leaf, then \ref{enum:chara-i}-\ref{enum:chara-iii} are also equivalent to
\begin{enumerate}[resume]
\item[(iv)]\label{enum:chara-iv} $\fol$ arises as an isoparametric foliation on $M$with respect to some Riemannian metric on $g$ in the sense of the introduction.
\end{enumerate}
\end{corollary}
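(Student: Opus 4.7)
The direction (iv) $\Rightarrow$ (i) is immediate, since every isoparametric function is transnormal by definition and connectedness of the submanifolds in $\fol$ ensures that the partition into connected components of the level sets coincides with $\fol$. For the converse, my plan is to build on the construction in the proof of \cref{lem:equivalent-chara} and upgrade the metric produced there (which only made $P\colon M\to [-1,1]$ a manifold submetry with $f = h\circ P$ transnormal) to a metric for which the same $f$ is additionally isoparametric.

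A direct calculation on the regular set $\{\nabla P \neq 0\}$, using that $\|\nabla P\| = 1$ there, yields
\[
\Delta_g f \,=\, h''(P) \,-\, h'(P)\, H,
\]
where $H := -\mathrm{div}(\nabla P)$ is the mean curvature of the leaf $P^{-1}(t)$ with respect to the unit normal $\nabla P$. Since $h''\circ P$ is already a function of $f$, the isoparametric condition $\Delta_g f = a(f)$ reduces to arranging that $H$ be constant along each regular leaf. The quadratic vanishing of $h'$ at $\pm 1$ prescribed in the proof of \cref{lem:equivalent-chara} is designed exactly to cancel the expected blow-up of $H$ of order $(k-1)/(1-|t|)$ near a singular leaf of codimension $k \geq 2$ (which exists by hypothesis), so that the resulting function $a$ automatically extends continuously across the critical values $f = \pm 1$.

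To achieve leafwise constancy of $H$, I would construct $g$ by a local-to-global procedure modelled on the double disc bundle metric constructions of \cite{GWY}. Near each $L^{\pm}$ of codimension $k$, I would fix a metric on $L^{\pm}$ together with a metric connection on the underlying vector bundle and take an associated Sasaki-type warped product metric on the Euclidean disc bundle $D^{\pm}\to L^{\pm}$, in which the sphere subbundles $\{|v|=r\}$ are orbits of a local fiberwise $\mathrm{O}(k)$-action and therefore have constant mean curvature along each leaf. On a collar of the intermediate level $\partial D^{+}\cong \partial D^{-}$ identified via the gluing diffeomorphism $\varphi$, I would use a product metric under which the leaves are totally geodesic, hence of constant (zero) mean curvature. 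The main obstacle I anticipate is to interpolate smoothly between these local models along the radial coordinate while keeping $P$ a submetry and preserving leafwise constancy of mean curvature; this is essentially the same technical point addressed in \cite{GWY}, solved by choosing a warped product interpolation that respects the sphere-factor symmetry of the local models. Once such $g$ is in place, $f = h\circ P$ is isoparametric on $(M,g)$ and its connected level sets recover $\fol$, yielding (iv).
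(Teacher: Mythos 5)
Your treatment of the direction (iv)~$\Rightarrow$~(i) is not correct as stated. In (iv) the leaves of $\fol$ are the \emph{connected components} of the level sets of an isoparametric function $f$, but the level sets themselves may well be disconnected, so $\fol$ need not be the fiber partition of $f$ (nor, a priori, of any transnormal function); the hypothesis that the submanifolds of $\fol$ are connected is automatic from this description and does not make the levels of $f$ connected, so your sentence does not bridge this gap. Moreover, the implication genuinely fails without the singular-leaf hypothesis, which you never use in this direction: on $L\times\sphere^1$ with a product metric, $f=\cos\theta$ is isoparametric ($\lVert\nabla f\rVert^2=1-f^2$, $\Delta f=\pm f$), the components of its levels are the slices $L\times\{\theta\}$, and no transnormal function has exactly these slices as fibers, since any function constant on the slices factors through $\sphere^1$ and hence has disconnected regular levels. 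The paper's argument for this direction is correspondingly not a one-liner: it passes from $f$ to a submetry $P\colon M\to[c,d]$ via \cref{lem:equivalent-chara}, then to the submetry $\hat P$ whose fibers are the connected components (\cite[Theorem 12.9]{KL22}), and invokes the singular leaf through \cref{lem:sing-fib}\,\ref{enum:sing-fib-i} to rule out that the base of $\hat P$ is a circle. This is exactly the step your proposal skips.

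For the converse (i)~$\Rightarrow$~(iv), the paper does not construct the metric by hand; it quotes \cite[Corollary D]{LiYang}, which is precisely the statement that a transnormal (equivalently, double disc bundle) structure can be realized isoparametrically. Your sketch aims at a direct construction, and its local ingredients are reasonable (connection-type metrics on the disc bundles do make the distance spheres leafwise CMC, and the quadratic vanishing of $h'$ handles continuity of $a$ at the critical values), but the step you yourself identify as the main obstacle---interpolating through the neck while keeping $\lVert\nabla P\rVert=1$ and the mean curvature constant on each leaf---is left unproved, and the proposed remedy does not apply: \cite[Lemma 3.1]{GWY} only produces a metric realizing the decomposition geometrically (this is what \cref{lem:equivalent-chara} already uses) and says nothing about constant mean curvature, and there is no common ``sphere-factor symmetry'' in the neck, since the two sphere-bundle structures on $\partial D^{\pm}$ are unrelated after gluing by $\varphi$. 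Concretely, writing the neck metric as $g_t+dt^2$ on $L^0\times[0,1]$, leafwise constancy of the mean curvature forces $\tr_{g_t}(\partial_t g_t)$ to depend on $t$ alone, which upon integration requires the boundary metrics induced from the two sides to have pointwise proportional volume forms; arranging this needs an additional normalization (for instance a Moser-type adjustment absorbed into $\varphi$ up to isotopy, which is harmless by \cref{P:foliated}) that your sketch does not provide. As written, the hard implication is asserted rather than proved; either cite \cite[Corollary D]{LiYang} as the paper does, or carry out the neck construction in full.
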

\begin{proof}
An isoparametric function $f\colon M\ra \RR$ is in particular transnormal, so by \cref{lem:equivalent-chara} there is manifold submetry $P\colon M\ra [c,d]$ with the same fibres. Its connected components are the fibres of another submetry $\hat P\colon M\ra \hat X$ whose base is still an interval because of \cref{lem:sing-fib}. This shows that (iv) implies \ref{enum:chara-ii}. Conversely, the fact that \ref{enum:chara-i} implies (iv) follows from \cite[Corollary D]{LiYang}.
\end{proof}

\subsection{Foliated diffeomorphism, concordance, and isotopy}\label{S:conc-iso-diffeo}
In \cref{SS:srf-ms-ddb}, we considered partitions $\fol$ of a connected closed manifold $M$ into submanifolds induced by a transnormal function, a submetry, or a double disc bundle decomposition. Two such partitioned manifolds $(M,\fol)$ and $(M',\fol')$ are \emph{foliated diffeomorphic (homeomorphic)} if there is a diffeomorphism (homeomorphism) that sends leaves to leaves. In the case of double disc bundles, this can be rephrased in terms of the gluing diffeomorphisms $\varphi$. To make this precise, we recall some terminology:

Two diffeomorphisms (homeomorphisms) $\varphi_0,\varphi_1\colon Q\ra R$ between closed manifolds $Q$ and $R$ are \emph{smoothly (topologically) concordant} if $(\varphi_0\sqcup \varphi_1)\colon Q\times\{0,1\}\ra R\times\{0,1\}$ extends to a diffeomorphism (homeomorphism) $\phi \colon Q\times [0,1] \ra R\times [0,1]$. If $\phi$ can be chosen so that $\phi(Q\times\{t\})\subset R\times\{t\}$ for $t\in[0,1]$, then the $\varphi_i$ are called \emph{smoothly (topologically) isotopic}. We write $\tilde{\pi}_0(\Diff(Q,R))$ for the set of smooth concordance classes of diffeomorphisms, ${\pi}_0(\Diff(Q,R))$ for the set of smooth isotopy classes of diffeomorphisms, and similarly $\tilde{\pi}_0(\Homeo(Q,R))$ and ${\pi}_0(\Homeo(Q,R))$ for homeomorphisms. As the notation suggests, the sets ${\pi}_0(\Diff(Q,R))$ and ${\pi}_0(\Homeo(Q,R))$ are the sets of path-components of spaces, namely the spaces of diffeomorphisms and homeomorphisms equipped with the $\mathcal{C}^\infty$- or the compact-open topology respectively. 

\begin{proposition}\label{P:foliated}
Fix manifolds $(M_0,\fol_0)$ and $(M_1,\fol_1)$ with partitions induced by 
double disc bundle decompositions $M_i\cong D^+\cup_{\varphi_i} D^-$ as in \cref{SS:srf-ms-ddb} with the same  bundles $D^\pm\ra L^\pm$ but different gluing diffeomorphisms $\varphi_i\colon \partial D^+\to \partial D^-$, $i=0,1$.
\begin{enumerate}[leftmargin=0.8cm]
\item\label{equ:foliated-isotopy} If $\varphi_0, \varphi_1$ are smoothly (topologically) isotopic, then $(M_0,\fol_0)$ and $(M_1,\fol_1)$ are foliated diffeomorphic (homeomorphic).
\item\label{equ:foliated-concordance} If $\varphi_0, \varphi_1$ are smoothly (topologically) concordant, then there is a diffeomorphism (homeomorphism) $M_0\cong M_1$ that sends $L^\pm$ and $L^0=\partial D^\pm$ to themselves.
\end{enumerate}
\end{proposition}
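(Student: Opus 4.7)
The plan is to construct the required (foliated) morphism $\Phi\colon M_0 \to M_1$ explicitly as the identity outside a fixed collar of $\partial D^+\subset D^+$, absorbing the mismatch between the two gluings $\varphi_0$ and $\varphi_1$ into a twist on that collar coming from the given (iso)concordance. Conceptually, $M_0$ and $M_1$ are built from the same disc bundles $D^\pm$ and only differ in how $D^-$ is attached to $D^+$ across $\partial D^+$, so it suffices to deform the gluing in an arbitrarily small neighbourhood of $\partial D^+$.

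For part (i), I will start from a smooth isotopy $\varphi_t$ from $\varphi_0$ to $\varphi_1$ and form the self-isotopy $\psi_t \coloneqq \varphi_1^{-1}\circ\varphi_t$ of $\partial D^+$, which satisfies $\psi_0 = \varphi_1^{-1}\varphi_0$ and $\psi_1 = \id$. Choosing a collar $c\colon \partial D^+\times [0,1]\hookrightarrow D^+$ whose slices $c(\partial D^+\times\{t\})$ are the concentric sphere subbundles of $D^+$, I will define $\Phi$ to be the identity on $D^-$ and on $D^+\setminus\mathrm{image}(c)$, and $\Phi(c(x,t))\coloneqq c(\psi_t(x),t)$ inside the collar. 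Well-definedness at the inner edge $\partial D^+$ comes from $\psi_0 = \varphi_1^{-1}\varphi_0$, which reconciles the two different boundary identifications, and at the outer edge from $\psi_1=\id$. Since the map preserves the $[0,1]$-slicing of the collar, it preserves all sphere subbundles of $D^+$, hence the entire foliation, giving the desired foliated diffeomorphism. The topological case runs identically using Brown's collar theorem.

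For part (ii), I will carry out the same construction with the self-isotopy replaced by a self-concordance, which will generally fail to preserve the slicing of the collar. Starting from the given concordance $\phi$, I plan to form $\phi'\coloneqq (\varphi_0^{-1}\times \id)\circ\phi$, a self-concordance of $\partial D^+\times[0,1]$ equal to $\id$ at $t=0$ and to $\varphi_0^{-1}\varphi_1$ at $t=1$; conjugating by the flip $(x,t)\mapsto(x,1-t)$ then yields a self-diffeomorphism $\Psi_0$ of $\partial D^+\times[0,1]$ with $\Psi_0|_{t=0}=\varphi_1^{-1}\varphi_0$ and $\Psi_0|_{t=1}=\id$. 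After modifying $\Psi_0$ by an isotopy relative to both ends to a diffeomorphism $\Psi$ that equals the identity on an open neighbourhood of $\partial D^+\times\{1\}$, I will define $\Phi$ to be the identity on $D^-$ and on $D^+\setminus\mathrm{image}(c)$, and $\Phi(c(x,t))\coloneqq c(\Psi(x,t))$ on the collar. This produces a diffeomorphism that fixes $L^\pm$ pointwise and sends $L^0=\partial D^+$ to itself via $\varphi_1^{-1}\varphi_0$; it need not preserve the full foliation, consistent with the weaker conclusion of (ii).

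The hard part will be the smoothing step in part (ii): ensuring that $\Psi$ equals the identity on an open neighbourhood of $\partial D^+\times\{1\}$ rather than only on that slice, which is what allows $\Phi$ to be glued across the outer edge of the collar. I plan to handle this via the uniqueness of smooth (respectively topological) collars up to isotopy: any self-diffeomorphism of $Q\times[0,1]$ that restricts to the identity on $Q\times\{1\}$ sends a collar of that boundary component to another collar, and is therefore isotopic, relative to $Q\times\{1\}$, to a self-diffeomorphism that equals the identity on an open neighbourhood of $Q\times\{1\}$. Once this is in place, the rest of the construction is formal and proceeds identically in both the smooth and the topological categories.
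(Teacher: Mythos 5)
Your construction is essentially the paper's own proof: the paper inserts a cylinder $\partial D^+\times[0,1]$ between $D^+$ and $D^-$ and lets the isotopy (resp.\ concordance) act on that middle piece, while you let it act on a collar of $\partial D^+$ inside $D^+$ --- the same argument in equivalent packaging, with the same conclusions in (i) and (ii). Two cosmetic points: as written your $\Psi_0$ restricts to $\varphi_0^{-1}\varphi_1$ (not $\varphi_1^{-1}\varphi_0$) at $t=0$, so it glues to a map $M_1\to M_0$ rather than $M_0\to M_1$ (or simply take $(\varphi_1^{-1}\times\id)\circ\phi$ with no flip), and the normalization you perform near the outer edge of the collar is also needed near $\partial D^+$ itself --- in (ii) make $\Psi$ equal to $(\varphi_1^{-1}\varphi_0)\times\id$ near $t=0$, and in (i) reparametrize the isotopy to be constant near both endpoints --- so that the piecewise-defined map is smooth across $L^0$; both follow from the same collar-uniqueness/reparametrization device you already invoke, and the paper is equally brief about this point.
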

\begin{proof} It is convenient to replace $D^+\cup_{\varphi_i} D^-$ up to foliated diffeomorphism with $\smash{D^+\cup_{\id_{\partial D^+}}\big(\partial D^+\times [0,1]\big)\cup_{\varphi_i}D^-}$. Here the latter is equipped with the partition into submanifolds given as the fibers of the map to $[-1,2]$ that sends $e\in D^+$ to $\lvert e\rvert-1$, $e\in D^-$ to $2-\lvert e\rvert$, and agrees with the projection  to $[0,1]$ on the middle part. If the $\varphi_i$ are smoothly isotopic, then $\id_{\partial D^+}$ and $\varphi_0^{-1}\circ \varphi_1$ are smoothly isotopic. A choice of isotopy $\phi\colon \partial D^+\times [0,1]\to \partial D^+\times [0,1]$ between them induces a diffeomorphism \[\hspace{-0.35cm}(\id_{D^+}\cup\phi\cup\id_{D^-})\colon \Big(D^+\cup_{\id_{\partial D^+}}\big(\partial D^+\times [0,1]\big)\cup_{\varphi_0}D^-\Big)\to \Big(D^+\cup_{\id_{\partial D^+}}\big(\partial D^+\times [0,1]\big)\cup_{\varphi_1}D^-\Big)\]
which is compatible with the maps to $[0,2]$, so is foliated. If $\phi$ is merely a concordance, then the above diffeomorphism still commutes with maps to $[0,2]$ except on the interior of the part $\partial D^+\times (0,1)$, so in particular it preserves the fibers over $0,1,2$, which are $L^+$, $L^0$, and $L^-$ respectively. This implies the claim in the smooth cases. The proof in the topological cases is analogous.
\end{proof}

The converse in \cref{P:foliated} \ref{equ:foliated-isotopy} does not hold in general, but the failure can be described in terms of the topological group $\Iso_b(D^\pm)$ of self-diffeomorphisms of $D^\pm$ that cover a self-diffeomorphism of $L^\pm$ and are fiberwise isometries, equipped with the $\mathcal{C}^\infty$ topology. Restricting such diffeomorphisms to the boundary sphere bundle yields a morphism $\pi_0(\Iso_b(D^\pm))\ra \pi_0(\Diff(\partial D^\pm))$ whose image we denote by $H_\pm\le \pi_0(\Diff(\partial D^\pm))$. These subgroups admit commuting actions on $\pi_0(\Diff(\partial D^+,\partial D^-))$; the group $H_-$ acts by postcomposition and $H_+$ by precomposition with the inverse. As observed in \cite[Corollary 2.5]{Ge16}, given a diffeomorphism $\varphi\colon \partial D^+\ra \partial D^-$, the foliated diffeomorphism type of the associated double disc bundle is encoded by the class of $\varphi$ in the double cosets of these commuting actions of $H_\pm$: 

\begin{proposition}\label{P:fol-diff}
In the situation of \cref{P:foliated}, the double disc bundles $M_i=D^+\cup_{\varphi_i}D^-$ are foliated diffeomorphic if and only if the gluing diffeomorphisms $\varphi_i$ for $i=0,1$ lie in the same double coset of $H_-\backslash \pi_0(\Diff(\partial D^+, \partial D^-))/H_+$.\end{proposition}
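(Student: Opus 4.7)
The plan is to prove both directions of the equivalence.

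For the ``if'' direction, suppose that $\varphi_1$ is smoothly isotopic to $h_-\circ\varphi_0\circ h_+^{-1}$, where $h_\pm\in H_\pm$ are the boundary restrictions of fiberwise isometries $\tilde h_\pm\in\Iso_b(D^\pm)$. Since each $\tilde h_\pm$ preserves the concentric sphere subbundles of $D^\pm$, the disjoint union $\tilde h_+\sqcup \tilde h_-$ descends along the gluings to a foliated diffeomorphism
\[
D^+\cup_{\varphi_0}D^-\;\longrightarrow\;D^+\cup_{h_-\circ\varphi_0\circ h_+^{-1}}D^-.
\]
Composing with the foliated diffeomorphism supplied by \cref{P:foliated}\,\ref{equ:foliated-isotopy} for the isotopy between $h_-\circ\varphi_0\circ h_+^{-1}$ and $\varphi_1$ then produces a foliated diffeomorphism $M_0\to M_1$.

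For the ``only if'' direction, start from a foliated diffeomorphism $F\colon M_0\to M_1$. It descends to a self-homeomorphism of the common leaf space $[-1,1]$, which is either the identity or the flip; the flip case reduces to the identity case by relabeling $(D^+,L^+)\leftrightarrow(D^-,L^-)$, so I focus on the identity case. Then $F$ preserves the values of $P_{D^\pm}$, hence fiberwise norms on both $D^\pm$, and thus restricts to smooth radius-preserving bundle diffeomorphisms $F^\pm\colon D^\pm\to D^\pm$ covering diffeomorphisms of $L^\pm$. Compatibility of $F$ with the boundary identifications forces
\[
\varphi_1\circ (F^+|_{\partial D^+})\;=\;(F^-|_{\partial D^-})\circ\varphi_0.
\]

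I expect the main obstacle to be that $F^\pm$ need not itself be a fiberwise isometry, since a smooth radius-preserving diffeomorphism of a disc need not be linear. To place $h_\pm:=F^\pm|_{\partial D^\pm}$ in $H_\pm$ up to isotopy, I would invoke a fiberwise Alexander trick: the family $F^\pm_t(v):=t^{-1}F^\pm(tv)$ for $t\in(0,1]$ extends smoothly to $t=0$ via the fiberwise differential of $F^\pm$ along the zero section $L^\pm$, and a direct Taylor expansion of the identity $|F^\pm(v)|^2=|v|^2$ at $L^\pm$ forces this differential to be fiberwise orthogonal. Hence $F^\pm_0\in\Iso_b(D^\pm)$, and restricting the smooth isotopy $\{F^\pm_t\}_{t\in[0,1]}$ to $\partial D^\pm$ shows that the class of $h_\pm$ lies in $H_\pm$. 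The displayed compatibility then yields $\varphi_1=h_-\circ\varphi_0\circ h_+^{-1}$ up to smooth isotopy, placing $\varphi_0$ and $\varphi_1$ in the same double coset of $H_-\backslash\pi_0(\Diff(\partial D^+,\partial D^-))/H_+$.
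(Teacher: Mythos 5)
Your ``if'' direction is fine, and your overall strategy for the ``only if'' direction (linearize the restriction of a foliated diffeomorphism to each disc bundle via the fiberwise differential/Alexander trick, forcing the boundary restriction into $H_\pm$ up to isotopy) is the standard route --- the paper itself offers no proof and simply cites [Ge16, Corollary 2.5]. However, there is a genuine gap at the step ``$F$ preserves the values of $P_{D^\pm}$, hence fiberwise norms''. A foliated diffeomorphism only sends leaves to leaves; the induced map on the leaf space $[-1,1]$ is an arbitrary homeomorphism fixing or swapping the endpoints, not just the identity or the flip. Consequently $F$ need not map $D^+\subset M_0$ onto $D^+\subset M_1$ at all (it maps it onto the union of leaves over $[-1,\sigma(0)]$ for the induced reparametrization $\sigma$), and even after it does, it is only \emph{sphere-bundle-preserving}, sending the radius-$r$ sphere bundle to the radius-$\rho(r)$ one for some increasing homeomorphism $\rho$ of $[0,1]$. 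Your Alexander family $F^\pm_t(v)=t^{-1}F^\pm(tv)$ then fails on two counts: it need not map $D^\pm$ (or $\partial D^\pm$) to itself for $t<1$, and the limit at $t=0$ is only a fiberwise \emph{conformal} map (scale factor $\rho'(0)$), not an element of $\Iso_b(D^\pm)$; the identity $|F^\pm(v)|^2=|v|^2$ that you Taylor-expand is exactly what is not available. The missing ingredient is a normalization step: precompose or postcompose $F$ with a foliated self-diffeomorphism of $M_1$ (built from the radial collar $\partial D^+\times(-1,1)$ of the regular part) that corrects the leaf-space reparametrization, so that the corrected $F$ maps $D^\pm$ to $D^\pm$ radius-preservingly, at least near $L^\pm$ and near $\partial D^\pm$; only then does your linearization argument go through. (A smaller slip: the restricted $F^\pm$ does not ``cover a diffeomorphism of $L^\pm$'' --- it has no reason to be fiber-preserving away from the zero section --- but this is harmless since only the $t=0$ limit needs to be a bundle map.)

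A second, lesser issue is your dismissal of the flip case ``by relabeling''. If $F$ interchanges the two sides, the compatibility relation you would obtain is $\varphi_1\simeq a\circ\varphi_0^{-1}\circ b^{-1}$ with $a\colon\partial D^+\to\partial D^-$ and $b\colon\partial D^-\to\partial D^+$ boundary restrictions of Euclidean bundle isomorphisms, and this does not visibly land in the double coset $H_-\,[\varphi_0]\,H_+$ (note it involves $\varphi_0^{-1}$). In the paper's application the flip cannot occur because $L^+\not\cong L^-$, but in your general argument this case needs to be either excluded by convention or treated honestly rather than waved away by relabeling, since relabeling also inverts the gluing map and exchanges the roles of $H_+$ and $H_-$.
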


\section{Proof of the main result}\label{S:proof}
This section serves to prove the following finiteness result on manifold submetries:

\begin{theorem}\label{T:true-main}
For a fixed dimension $n$ and constants $\kappa, r,\nu>0$,  consider the class $\mathcal M^{n,\kappa,r,\nu}$ of  manifold submetries $P:M\to X$ with connected fibers out of closed connected Riemannian manifolds $M$ such that $P$ has at least one singular fiber and 
\[\dim(X)=1,\ \ \dim(M)=n,\ \   |\sec(M)|<\kappa,\ \  \operatorname{vol}(M)>\nu,\ and\  \operatorname{diam}(M)<r.
\]
Then $\mathcal M^{n,\kappa,r,\nu}$  contains only finitely many foliated homeomorphism types. If $n\neq 5$, then it  also contains only finitely many foliated diffeomorphism types.
\end{theorem}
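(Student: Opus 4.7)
The plan is a Cheeger--Gromov compactness argument. Given any sequence of submetries $P_i\colon(M_i,g_i)\to [c_i,d_i]$ in $\mathcal M^{n,\kappa,r,\nu}$, I would show that after passing to a subsequence the associated double disc bundle decompositions from \cref{lem:equivalent-chara} become mutually foliated homeomorphic (and foliated diffeomorphic when $n\neq 5$) for all sufficiently large $i$. By \cref{P:foliated,P:fol-diff}, the foliated type is encoded by the double coset of the gluing diffeomorphism $\varphi_i\colon \partial D_i^+\to\partial D_i^-$, so the problem reduces to showing that these gluing data stabilise along a subsequence.

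First, I would establish a uniform lower bound $d_i-c_i\geq \ell_0>0$ on the length of the base. Since each $P_i$ has a singular fibre, a unit-speed normal geodesic emanating from it encounters focal points; the curvature bound $|\sec|<\kappa$ gives Jacobi-field estimates controlling both the spacing of consecutive focal points and the growth of nearby regular fibres, while the volume and diameter bounds force sufficiently rich focal behaviour across $[c_i,d_i]$. Combined, these yield $\ell_0=\ell_0(n,\kappa,\nu,r)$. Next, by Cheeger's theorem, after passing to a subsequence we may assume all $(M_i,g_i)$ are smoothly diffeomorphic to a fixed $M$ and that $g_i$ converges in the $\mathcal C^{1,\alpha}$ topology to a $\mathcal C^{1,\alpha}$ metric $g_\infty$ with a two-sided Alexandrov curvature bound. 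Since the $P_i$ are $1$-Lipschitz onto intervals of length between $\ell_0$ and $r$, a further diagonal subsequence ensures that the $[c_i,d_i]$ converge to some $[c_\infty,d_\infty]$ and, by Arzel\`a--Ascoli, that the $P_i$ converge uniformly to a manifold submetry $P_\infty\colon(M,g_\infty)\to [c_\infty,d_\infty]$ with $\mathcal C^1$-fibres.

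Applying \cref{rem:c-0-bundle-decom} to $P_\infty$ yields a double disc bundle decomposition $M\cong D(L_\infty^+)\cup_{\varphi_\infty}D(L_\infty^-)$ in which $\varphi_\infty$ is a priori only a homeomorphism. Using continuity of the normal exponential map under $\mathcal C^{1,\alpha}$ convergence, I would argue that for large $i$ the singular fibres $L_i^\pm\subset M$ can be identified with $L_\infty^\pm$ via a $\mathcal C^{1,\alpha}$ ambient diffeomorphism, their normal disc bundles identified with those of $P_\infty$, and that under these identifications the smooth gluing diffeomorphisms $\varphi_i$ converge to $\varphi_\infty$ in the $\mathcal C^0$ topology. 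By local contractibility of $\Homeo(\partial D(L_\infty^+),\partial D(L_\infty^-))$ (Chernavski, Edwards--Kirby), any two $\varphi_i,\varphi_j$ sufficiently $\mathcal C^0$-close to $\varphi_\infty$ are topologically isotopic, so by \cref{P:foliated} the $(M_i,\fol_i)$ for large $i$ are all foliated homeomorphic, proving the first statement.

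For $n\neq 5$, the upgrade to foliated diffeomorphism is then purely algebraic: by the finiteness of the kernel of $\pi_0(\Diff(Q))\to\pi_0(\Homeo(Q))$ for closed $(n-1)$-manifolds $Q$ from \cref{L:black-magic}, the topological isotopy class of $\varphi_\infty$ contains only finitely many smooth isotopy classes, and \cref{P:fol-diff} yields finitely many foliated diffeomorphism types. The main technical obstacle I foresee lies in passing from $\mathcal C^{1,\alpha}$ Cheeger--Gromov convergence of the metrics to genuine $\mathcal C^0$ convergence of the gluing diffeomorphisms: this requires carefully tracking the singular fibres and their normal exponential maps in the limit, using the two-sided Alexandrov curvature bound on both $g_i$ and $g_\infty$ to make the construction underlying \cref{rem:c-0-bundle-decom} compatible with the Cheeger--Gromov convergence.
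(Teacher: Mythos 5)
Your architecture coincides with the paper's: a contradiction/compactness argument via Cheeger--Gromov, a uniform lower bound on the length of the base interval coming from the singular fibre, reduction to double disc bundles with fixed bundle data, $\mathcal C^0$-convergence of the gluing diffeomorphisms, local contractibility of homeomorphism groups to get foliated homeomorphism, and \cref{L:black-magic} for the smooth upgrade when $n\neq 5$. However, the two steps that carry the actual mathematical content are not supplied, and in one of them the mechanism you sketch would not work as stated. For the lower bound on the base length, you appeal to ``Jacobi-field estimates controlling the spacing of consecutive focal points'' together with volume and diameter bounds forcing ``rich focal behaviour''. The hypotheses give no control on the second fundamental forms of the leaves, so the location of the first focal point of a regular leaf (and hence the spacing of focal points) cannot be bounded below from $|\sec|<\kappa$ alone; and volume and diameter play no role in this step. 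The paper's \cref{P:int-bound} instead counts focal points: a unit normal geodesic from $L^0$ towards the singular leaf $L^+$ projects to a billiard path in $[-\ell,\ell]$ and meets $L^+$ at the times $(1+4k)\ell$, each such point being focal for $L^0$ because $\operatorname{codim}L^+\ge 2$; this gives at least $n$ focal points on $(0,4n\ell)$, whereas if $4n\ell<\pi/\sqrt{\kappa}$ Rauch excludes conjugate points and \cite[Corollary 1.2]{Lyt09} caps the number of focal points along a conjugate-point-free segment by $\dim L^0=n-1$. This yields $\ell\ge \pi/(4n\sqrt\kappa)$, independent of $\nu$ and $r$.

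The second gap is the one you flag yourself: identifying the leaves $L_i^\pm$ (and their normal disc bundles) for all large $i$ with a single fixed pair of smooth submanifolds by ambient diffeomorphisms that converge, so that the gluing maps \eqref{equ:gluing-diffeo} become maps between one fixed pair of sphere bundles converging uniformly to the limit homeomorphism. This is precisely \cref{prop:aux-prop} and the paragraph following it, and it is the technical heart of the proof: it uses uniform reach of the leaves, $(1+\delta)$-Lipschitz closest-point projections, Hausdorff and tangent-space convergence of $L_i^\pm$ to $L_\infty^\pm$, volume comparison to show the projections are injective coverings, and the representation of nearby leaves as $\mathcal C^1$-sections of a normal bundle, followed by a separate smoothing of the limit leaves. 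Saying ``using continuity of the normal exponential map under $\mathcal C^{1,\alpha}$ convergence, I would argue\dots'' leaves all of this open; note also that the exponential maps only converge uniformly (not in $\mathcal C^1$) and the limit exponential map is merely Lipschitz, which is exactly why the limit decomposition of \cref{rem:c-0-bundle-decom} is only topological and why the identification has to be built by hand rather than read off from the exponential maps. Your treatment of the final step ($n\neq5$, finitely many smooth isotopy classes within a topological isotopy class via \cref{L:black-magic} applied to the closed $(n-1)$-manifold $\partial D^\pm$) agrees with the paper; likewise the passage from Arzel\`a--Ascoli limits to a genuine manifold submetry with $\mathcal C^1$-fibres should be attributed to the structure theory of \cite{KL22} rather than taken for granted.
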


Note that \cref{T:true-main} implies \cref{main-theorem} in view of the characterisation of isoparametric foliations on $M$ with $|\pi_1(M)|<\infty$ in terms of submetries to intervals with singular fibers resulting from \cref{lem:equivalent-chara,lem:sing-fib,cor:isoparam-char}. 

\subsection{Proof of \cref{T:true-main}}We begin with the following non-collapsing result:
\begin{proposition}\label{P:int-bound}
Fix constants $\kappa,\ell>0$ and a manifold submetry $\subm:M^n\to  [-\ell,\ell]$ on a closed connected Riemannian manifold $(M,g)$ with connected fibers and at least one singular fiber. If $\sec(M, g)<\kappa$, then $\ell\geq{\pi\over 4n\sqrt{\kappa}}$.
 \end{proposition}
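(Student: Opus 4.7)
The plan is to realize each traversal of the singular fiber $L^+$ by a suitably chosen horizontal geodesic as a focal point of a fixed regular fiber $F$, and then to invoke a Morse--Schoenberg style upper bound on focal-point counts to deduce the desired lower bound on $\ell$.

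First, I would invoke \cref{lem:equivalent-chara} to rescale so that $P$ arises from a double disc bundle decomposition $M\cong D(L^+)\cup_\varphi D(L^-)$ with zero sections $L^\pm=P^{-1}(\pm\ell)$. The connectedness of fibers together with the existence of a singular fiber allows me to assume $\dim L^+\leq n-2$. I take $F\coloneqq P^{-1}(0)$, a smooth connected hypersurface, and let $v=\nabla P/\|\nabla P\|$ be the smooth unit normal field to $F$ pointing towards increasing $P$. For any $q\in F$, I then consider the unit-speed horizontal geodesic $\gamma(t)\coloneqq \exp_q(tv(q))$. Because $P$ is $1$-Lipschitz and horizontal geodesics realize distances between fibers, $t\mapsto P(\gamma(t))$ is the triangular wave of period $4\ell$ and initial slope $+1$, so $\gamma(t_k)\in L^+$ at each $t_k\coloneqq (4k+1)\ell$, $k\in\ZZ_{\geq 0}$.

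Next, I would argue that each $\gamma(t_k)$ is a focal point of $F$ along $\gamma$ of multiplicity at least $1$. The endpoint map $E_k\colon F\to L^+$, $x\mapsto\exp_x(t_k v(x))$, is smooth, and since $\dim F=n-1>\dim L^+$ its differential $dE_k|_q$ has nontrivial kernel. For $0\neq X\in\ker dE_k|_q$ and a curve $\alpha$ in $F$ tangent to $X$ at $q$, the variation $\gamma_s(t)=\exp_{\alpha(s)}(tv(\alpha(s)))$ is through $F$-normal unit-speed geodesics, so its variation field $J=\partial_s\gamma_s|_{s=0}$ is an $F$-Jacobi field along $\gamma$ with $J(0)=X\neq 0$ and $J(t_k)=dE_k|_q(X)=0$.

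Combining this with a Morse--Schoenberg bound on the total multiplicity of focal points of $F$ along $\gamma|_{[0,T]}$ --- of the form $(n-1)T\sqrt{\kappa}/\pi+n$, obtained from the classical bound on conjugate points of $\gamma(0)$ together with an index-form correction of $\codim F=1$ --- and applying with $T=(4K+1)\ell$ yields $K+1\leq (n-1)(4K+1)\ell\sqrt{\kappa}/\pi+n$. Dividing by $K$ and letting $K\to\infty$ gives $1\leq 4(n-1)\ell\sqrt{\kappa}/\pi$, and thus $\ell\geq \pi/(4n\sqrt{\kappa})$. The hard part will be the Morse--Schoenberg step for focal points of a submanifold rather than for conjugate points of a single point; this can be handled either by the index-form comparison just sketched or by approximating focal points of $F$ by conjugate points of $\gamma(-\epsilon)$ as $\epsilon\to 0^+$, using that the two counts differ by at most $\codim F=1$.
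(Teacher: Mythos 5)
Your construction of focal points is essentially the paper's argument: you take the regular fiber $F=L^0$, follow a unit normal geodesic, use the billiard behaviour of $P\circ\gamma$ to hit the singular fiber at times $(4k+1)\ell$, and turn the dimension drop $\dim L^+\le n-2<\dim F$ into a nontrivial $F$-Jacobi field vanishing at each such time. (Do note that the triangular-wave claim for \emph{every} starting point $x\in F$ is not a consequence of $1$-Lipschitzness alone; it is the statement that horizontal geodesics of a submetry project to billiard trajectories in the base, which the paper cites from \cite{KL22}. You need this for all $x$ to make the endpoint maps $E_k$ well defined.) Where you diverge is the counting step: the paper works on the single segment $(0,4n\ell)$, gets absence of conjugate points from Rauch when $4n\ell<\pi/\sqrt\kappa$, and then invokes \cite[Corollary 1.2]{Lyt09}, which bounds the number of focal points on a conjugate-point-free segment by $\dim L^0=n-1$; this gives the stated constant directly. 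Your asymptotic Morse--Schoenberg count over long segments is a legitimate alternative and avoids citing \cite{Lyt09}, at the cost of needing the focal Morse index theorem plus an index comparison.

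However, the justification you give for the focal-point bound is wrong as stated. The index form for the focal problem is defined on fields with $V(0)\in T_qF$, $V(T)=0$, which contains the fixed-endpoint fields as a subspace of codimension $\dim F=n-1$, not $\codim F=1$; so the correct correction term is $\dim F$, exactly as in \cite{Lyt09}. Likewise, the proposed approximation by conjugate points of $\gamma(-\epsilon)$ with an error of at most $\codim F=1$ is false: for the totally geodesic equator $F\subset\sphere^n(\kappa)$, the segment $(0,\tfrac{\pi}{2\sqrt\kappa}+\delta)$ carries $n-1$ focal points of $F$ but no conjugate points of $\gamma(-\epsilon)$, so the discrepancy can be as large as $\dim F$. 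Fortunately your argument is insensitive to this: with the correct correction the bound reads $K+1\le (n-1)(4K+1)\ell\sqrt\kappa/\pi+(n-1)$, and since the error is a constant independent of $K$, letting $K\to\infty$ still gives $\ell\ge \pi/(4(n-1)\sqrt\kappa)\ge \pi/(4n\sqrt\kappa)$ (your stated bound with the additive constant $n$ is also numerically valid, just not by the reason you give). So the proof is repairable without changing its structure, but the ``index-form correction of $\codim F=1$'' must be replaced by $\dim F$.
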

\begin{proof}
We may assume without loss of generality that $L^+\coloneq P^{-1}(\ell)$ is singular, i.e.\,of codimension $>1$. We will rely on the fact that any geodesic $\gamma$ in $M$ starting at $L^0$ with $\gamma'(0)\perp L^0$ and $|\gamma'(0)|=1$ is projected by $P$ to a piecewise geodesic in $[-l,l]$ parametrized by arclength whose break points occur only at $-l$ and $l$ (see \cite[Propositions 3.2 and 12.5, Corollary 7.4]{KL22}). Thus if $\gamma$ starts in the direction of $L^+$, then $\gamma((1+4k)\cdot \ell)\in L^+$ for all $k\ge0$. Since $L^+$ has strictly smaller codimension than $L^0$, this implies that the points $\gamma((1+4k)\cdot \ell)$ for $k\ge0$ are singular values for the normal exponential map $\exp\colon \nu_g(L^0)\ra M$, i.e.\,that they are focal points of $L^0$. The number of focal points for $L^0$ on $\gamma\colon (0,4n\ell)\ra M$, counted with multiplicity, is thus at least $n$. Now suppose by contradiction that $\smash{4n\ell<{\tfrac{\pi}{\sqrt \kappa}}}$. It then follows from Rauch's comparison theorem that $\gamma\colon (0,4n\ell)\ra M$ has no conjugate points. But this implies by \cite[Corollary 1.2]{Lyt09} that the number of focal points of $L^0$ along $\gamma\colon (0,4n\ell)\ra M$ is at most $\mathrm{dim}(L^0)=n-1$ (this result is stated for manifolds without focal points, but the argument applies to any geodesic segment without conjugate points), so we arrive at a contradiction.
\end{proof}

Equipped with this proposition, we now prove  \cref{T:true-main}. Arguing by contradiction, we choose an infinite sequence $(M_i, g_i, \subm_i)\in \mathcal M^{n,\kappa,r,\nu}$ of manifold submetries that are pairwise not foliated homeomorphic, and if $n\neq5$ also pairwise not foliated diffeomorphic. By \cref{lem:sing-fib}, the base $X_i$ is isometric to an interval, so up to postcomposition by an isometry, we may assume $X=[-l_i,l_i]$ with 
$2 l_i \leq\operatorname{diam}(M_i)<r$. By \cref{P:int-bound}, there is a common positive lower bound for all the $l_i$, so by rescaling by the uniformly bounded numbers $\frac 1 {l_i}$ and changing $\kappa, r, \nu$, we may assume $l_i=1$ so $X=[-1,1]$ for all $i$. By Cheeger--Gromov's compactness theorem \cite[Theorem 4.4]{Peters},
upon choosing a subsequence we can find a smooth manifold $M$ and diffeomorphisms $\psi_i:M\to M_i$,  such that the metrics $\psi_i^*(g_i)$ $\mathcal C{}^{1}$-converge to a $\mathcal C{}^{1,1}$-metric $g_\infty$ on $M$. Moreover, the exponential maps $\exp_i\coloneqq (\psi_i^{-1}\circ \exp_{(M_i,g_i)}\circ d\psi_i):TM\to M$ converge uniformly on compact subsets to $\exp_\infty\coloneqq \exp_{(M,g_\infty)}\colon TM\to M$. Identifying $M_i$ with $M$ via $\psi_i$, we may assume that $M_i=M$ and $\psi _i=\mathrm{id}_M$ for all $i$. By \cite[Lemma 2.4]{KL22}, upon passing to a further subsequence, we can furthermore assume that the submetries $P_i: (M,g_i)\ra [-1,1]$ converge pointwise to a submetry $P_\infty:(M,g_\infty)\to [-1,1]$ such that the fibers $P_i^{-1}(t)$ converge pointwise to the fibers $P_\infty^{-1}(t)$ for all $t$. The $\mathcal C{}^{1}$-Riemannian manifold $(M,g_\infty)$ inherits the double-sided curvature bound in the sense of Alexandrov from the $(M,g_i)$ (cf.\ \cite{KL21})  and all fibers of $P_\infty$ are $\mathcal C^{1}$-submanifolds of $M$ by \cite[Corollary 12.6]{KL22}. We write
\[L_i^{\pm}\coloneqq P_i^{-1}(\pm 1)\quad \text{and}\quad L_i^{0}\coloneqq P_i^{-1}(0)\quad \text{for } i\le \infty.
\]
By \cref{lem:equivalent-chara} and its proof, the foliated diffeomorphism type of $P_i$ for $i<\infty$ is that of the double disc bundle $D_{g_i}(L^+_i)\cup_{\varphi_i} D_{g_i}(L^-_i)$ where $\varphi_i$ is the gluing diffeomorphism given as the composition \begin{equation}\label{equ:gluing-diffeo}\varphi_i\coloneqq \big((\exp_i^-)^{-1}\circ \exp_i^+\big)\colon 
\partial D_{g_i}(L^+_i)\ra \partial D_{g_i}(L^-_i)\end{equation} where $\exp_i^\pm\colon \partial D_{g_i}(L^+_i)\ra L^0_i$ is the restriction of the normal exponential map of $(M,g_i)$. Moreover, the same holds for the foliated \emph{homeomorphism} type in the case $i=\infty$ (see \cref{rem:c-0-bundle-decom}), but we do \emph{not} get the latter for the foliated $\mathcal C^{1}$ diffeomorphism type. This is because the exponential map $\exp$ is no better than Lipschitz continuous in general and the smooth exponential maps $\exp_i$ need not converge in the $\mathcal C^1$-sense (cf.\ \cite{Pu}). The strategy is now to show that we can identify the submanifolds $L^\pm_i$ and their normal bundles for different $i$ with each other and thus reduce to studying convergence of the gluing maps $\varphi_i$ on a single space. The main ingredient to achieve this is the following proposition:

\begin{proposition}\label{prop:aux-prop}
Upon passing to a subsequence, there are smooth submanifolds $\tilde L^\pm\subset M$ and diffeomorphisms $\alpha_i\colon M\ra M$ which converge in the $\mathcal C{}^{1}$-topology to a $\mathcal C{}^{1}$-diffeomorphism $\alpha_\infty$ such that $\alpha_i(\tilde L^\pm)=L_i^\pm$ for all $i\le \infty$.
\end{proposition}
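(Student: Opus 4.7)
The plan is to first produce a smooth model $\tilde L^\pm \subset M$ that is $C^1$-close to the limiting submanifold $L_\infty^\pm$ (which a priori is only $C^1$-regular), and then to realize each $L_i^\pm$ as the image of $\tilde L^\pm$ under an ambient diffeomorphism $\alpha_i$, with the $\alpha_i$ converging in $C^1$ to a $C^1$-diffeomorphism $\alpha_\infty$ that takes $\tilde L^\pm$ to $L_\infty^\pm$.

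\textbf{Step 1 (smoothing the limit).} Since $L_\infty^\pm$ is a $C^1$-submanifold of the smooth manifold $M$, a partition-of-unity argument in $C^1$-tubular coordinates around $L_\infty^\pm$ produces a smooth submanifold $\tilde L^\pm$ that is the graph of a smooth section of a $C^1$-tubular neighborhood of $L_\infty^\pm$, arbitrarily $C^1$-close to the zero section. Straightening this graph fiberwise along normal geodesics yields a $C^1$-small $C^1$-diffeomorphism $\alpha_\infty \colon M \to M$ with $\alpha_\infty(\tilde L^\pm) = L_\infty^\pm$. Handling the possibly different components of $L_\infty^\pm$ independently is harmless since they are disjoint.

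\textbf{Step 2 (upgrading convergence of the fibers).} The crucial analytic input is that, after passing to a subsequence, the smooth submanifolds $L_i^\pm \subset (M,g_i)$ converge to $L_\infty^\pm$ in the $C^1$-sense. Because $P_i$ is a manifold submetry from a manifold with $|\sec(M,g_i)| < \kappa$, the fibers $L_i^\pm$ admit a uniform lower bound on their focal radius inside $(M,g_i)$ and a uniform upper bound on their second fundamental forms; both facts can be extracted from the regularity theory for manifold submetries in \cite{KL22} together with the $\mathcal{C}^1$-metric convergence framework of \cite{KL21}. Combined with the pointwise convergence $P_i^{-1}(t)\to P_\infty^{-1}(t)$ recalled just before the statement, these uniform bounds promote the pointwise convergence to subsequential convergence of $L_i^\pm$ to $L_\infty^\pm$ as $C^{1,\alpha}$-submanifolds of $M$.

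\textbf{Step 3 (producing the $\alpha_i$).} By Step 2, for sufficiently large $i$ the submanifold $L_i^\pm$ lies inside a fixed smooth tubular neighborhood of $\tilde L^\pm$ in $M$ and is the graph of a smooth section $s_i^\pm$ of the smooth normal bundle $\nu(\tilde L^\pm)$, with $s_i^\pm \to s_\infty^\pm$ in $C^1$, where $s_\infty^\pm$ is the section describing $L_\infty^\pm=\alpha_\infty(\tilde L^\pm)$. Multiplying $s_i^\pm$ by a cutoff supported in this tubular neighborhood and pushing along the corresponding fiberwise translations in the smooth normal bundle gives an ambient diffeomorphism $\alpha_i \colon M \to M$, smooth for $i < \infty$ and $C^1$ for $i = \infty$, with $\alpha_i(\tilde L^\pm) = L_i^\pm$. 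The $C^1$-convergence of the sections directly translates into $\alpha_i \to \alpha_\infty$ in $C^1$.

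The main obstacle is Step 2: the $L_i^\pm$ a priori converge only in the Hausdorff (pointwise) sense furnished by the Cheeger--Gromov compactness theorem and \cite[Lemma 2.4]{KL22}, and upgrading this to $C^1$-convergence of submanifolds requires genuinely geometric input from the manifold submetry structure, namely controlled focal behavior and bounded second fundamental form of the fibers. Once this is secured, Steps 1 and 3 are routine consequences of the smooth approximation of $C^1$-submanifolds and the tubular neighborhood theorem.
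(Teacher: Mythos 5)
Your overall architecture (smooth the $\mathcal C^1$ limit leaves to smooth models $\tilde L^\pm$, show that the $L_i^\pm$ converge to $L_\infty^\pm$ as $\mathcal C^1$-submanifolds, then write the $L_i^\pm$ as graphs of sections of $\nu(\tilde L^\pm)$ converging in $\mathcal C^1$ and build $\alpha_i$ by cut-off fiberwise translations) agrees with the second half of the paper's argument, and Steps 1 and 3 are fine modulo routine care (e.g.\ defining $\alpha_\infty$ by the same graph construction as the $\alpha_i$, and making the supports near $\tilde L^+$ and $\tilde L^-$ disjoint). The problem is Step 2, which you yourself identify as the main obstacle: this is precisely the content of the proposition, and you only assert it, attributing the needed uniform bounds to \cite{KL22} and \cite{KL21}, which do not contain such a statement. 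What the submetry structure gives directly (after the normalization of the base to $[-1,1]$) is normal injectivity radius, i.e.\ reach, at least $1$ for $L_i^\pm$ in $(M,g_i)$. From this plus the two-sided curvature bound one can indeed extract a uniform bound on the second fundamental forms by focal-point (Riccati) comparison, and a graph-compactness argument for submanifolds with uniform reach and bounded second fundamental form in ambient metrics converging in $\mathcal C^1$ would then give subsequential $\mathcal C^{1,\alpha}$-convergence, with the limit identified with $L_\infty^\pm$ (and the dimensions of the $L_i^\pm$ stabilized) via the pointwise fiber convergence. None of these ingredients is proved or precisely referenced in your proposal, so as written the proof is incomplete exactly at its central point.

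For comparison, the paper avoids any compactness theorem for submanifolds with bounded second fundamental form: it exploits the reach bound only through the almost-$1$-Lipschitz closest-point projections of \cite{Lyt05} to compare the dimensions and volumes of $L_i^\pm$ and $L_\infty^\pm$ (via a homotopy/surjectivity argument), obtains convergence of the tangent and normal spaces directly from the convergence of horizontal geodesics of the submetries \cite[Section 12]{KL22}, and then shows that the footpoint projection onto a Hirsch-smoothed copy $\bar L^\pm$ is an injective covering (using the volume convergence), so that the $L_i^\pm$ become graphs of sections converging in $\mathcal C^1$. Your route is viable and would be a legitimate alternative once Step 2 is actually carried out, but carrying it out requires an argument of essentially the same substance as the one in the paper; invoking \cite{KL22} and \cite{KL21} does not discharge it.
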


Before turning to the proof of this proposition, we use it to finish the proof of \cref{T:true-main}.  The derivatives of the $\alpha_i$ induce smooth bundle isomorphisms $d\alpha_i\colon \nu(\tilde{L}^\pm)\ra  \nu(L_i^\pm)$ between the normal bundles of the $\tilde{L}^\pm$ and $L_i^\pm$ as quotient vector bundles. Identifying the quotient bundle $\nu(L_i^\pm)$ with the subbundle $\nu_{g_i}(L_i^\pm)\subset TM$ using the Euclidean metric $g_i$ and pulling back $g_i$ along $d\alpha_i$, we obtain a sequence of smooth Euclidean metrics $d\alpha_i^*g_i$ on the vector bundle $\nu(\tilde{L}^\pm)$ that converge against a $\mathcal{C}^0$-Euclidean metric $d\alpha_\infty^*g_\infty$. Now fix a smooth Euclidean metric $\tilde g$ on $\nu(\tilde{L}^\pm)$ and choose smooth Euclidean bundle isomorphisms $u_i\colon (\nu(\tilde{L}^\pm),\tilde g)\ra (\nu(\tilde{L}^\pm),d\alpha_i^*g_i)$ that cover the identity and converge uniformly on compact subsets against a $\mathcal{C}^0$ isomorphisms $u_\infty\colon (\nu(\tilde{L}^\pm),\tilde g)\ra (\nu(\tilde{L}^\pm),d\alpha_\infty^*g_\infty)$ (this is possible since Euclidean bundles with respect to different metrics are isomorphic and these isomorphisms can be chosen to be close if the metrics are close). With respect to the resulting isomorphisms $w_i\coloneqq (u_i\circ d\alpha_i)\colon (\nu(\tilde{L}^\pm),\tilde g)\ra (\nu_{g_i}(L_i^\pm),g_i)$, the gluing diffeomorphism \eqref{equ:gluing-diffeo} becomes a diffeomorphism between the sphere bundles of $(\nu(\tilde L^\pm),\tilde g)$,
\begin{equation}\label{equ:new-gluing-diffeo}\smash{\big(w_i^{-1} \circ \varphi_i\circ w_i)\colon S(\nu(\tilde L^+),\tilde g)\ra  S(\nu(\tilde L^-),\tilde g)},\end{equation} so now source and target are the same for all $i$. As the $d\alpha_i$, and $w_i=u_i\circ d\alpha_i$ converge uniformly against $u_\infty$ and $w_\infty=u_\infty\circ d\alpha_\infty$ respectively, the diffeomorphisms \eqref{equ:new-gluing-diffeo} converge uniformly against the homeomorphism $\smash{w_\infty^{-1} \circ \varphi_\infty\circ w_\infty}$. Using that the space of homeomorphisms between two closed manifolds (in our case the (co)domain of \eqref{equ:new-gluing-diffeo}) with the topology of uniform convergence is locally contractible by \cite{Cern69}, it follows that the diffeomorphisms \eqref{equ:new-gluing-diffeo} are all topologically isotopic to each other for $i\gg0$, so \cref{P:foliated} implies that the $(M,g_i,P_i)$ are foliated homeomorphic to each other for $i\gg0$. Moreover, if $n\neq5$, then by \cref{L:black-magic} below there are only finitely many smooth isotopy classes of diffeomorphisms between closed manifolds within a given topological isotopy class, so it follows that up to passing to a subsequence the diffeomorphisms $\tilde \varphi_i$ are all smoothly isotopic to each other, and therefore the $(M,g_i,P_i)$ are all foliated diffeomorphic by \cref{P:foliated}. This is a contradiction to our choice of the $(M,g_i,P_i)$.

\begin{proof}[Proof of \cref{prop:aux-prop}]It suffices to construct
 $\alpha_i$ as $\mathcal{C}^1$-diffeomorphisms, since the space of $\mathcal{C}^\infty$-diffeomorphisms of $M$ that send $\tilde L^\pm$ to $L^\pm_i$ is dense in the space of $\mathcal{C}^1$-diffeomorphisms with this property (see e.g.\cite[2.2.7, 3.3.5]{Hir94}).

The smooth submanifolds $L^{\pm}_i\subset (M,g_i)$ have normal  injectivity radius at least $1$ (hence, these subsets have \emph{reach} at least $1$ in $(M,g_i)$, cf. \cite{KL21}, \cite{Lyt05}).
Denote by $\Pi_i$ the closest-point projection onto $L^+_i$ with respect to the metric  $g_i$, defined in a tube of radius $1$ around $L_i^+$. Similarly, let $\Pi_\infty$ be the closest-point projection onto the $\mathcal{C}^1$-submanifold $L_\infty^+\subset (M,g_\infty)$.  Note, that $\Pi_\infty$ need not be $\mathcal C^1$. Now fix a small $\delta >0$.
For all sufficiently small $\varepsilon=\varepsilon(\delta)>0$,
 the projections $\Pi_i$ and $\Pi_\infty$ are $(1+\delta)$-Lipschitz continuous on the $4\varepsilon$-tubes around $L_i^+$ \cite[Theorems 1.6, 1.3]{Lyt05}.  For all sufficiently large $i$,  the Hausdorff-distance between $L_{\infty}^+$ and $L_i^+$ is at most $\varepsilon$.
Then the composition $(\Pi_i \circ \Pi_\infty)\colon L_i^+\to L_i^+$ is a continuous map $2\varepsilon$-close to the identity.  Once $2\varepsilon$ is smaller than the  injectivity radius of $M$,  we consider the homotopy  along shortest geodesics inside $M$ from $\Pi_i \circ \Pi_\infty$ and the identity of $L_i^+$.  Projecting this homotopy by $\Pi_i$ to $L_i^+$ we deduce that $(\Pi_i \circ \Pi _{\infty})$ is homotopic to the identity of $L_i^+$ inside $L_i^+$.  Therefore, $(\Pi_i \circ \Pi_\infty):L_i^+\to L_i^+$ is surjective and thus $\Pi_i\colon L_{\infty}^+\ra L_i^+$ is surjective  too.
 Exchanging the roles of $\Pi_\infty$ and $\Pi_i$, we see that $\Pi_\infty:L_i^+\to L_\infty^+$ is also surjective. So $L_\infty^+$ and $L_i^+$ have the same dimension and almost the same volume (up to the multiplicative constant $(1+\delta)^{n-1}$, which can be made arbitrary close to $1$). 
 
 Now fix points $x_i \in L_i ^+$ converging to a point $x_\infty\in L_\infty^+$ and geodesics $\gamma _i$ in $(M,g_i)$ starting at $x_i$ that are orthogonal to $L_\infty^+$.  Then $\gamma _i$ are horizontal with respect to $P_i$ and converge  to a horizontal geodesic $\gamma_\infty$  in $(M,g_\infty)$ starting  at $x_\infty$, cf. \cite[Section 12]{KL22}.  We deduce that the normal
spaces of $\nu _{x_i}L_i^+$ with respect to the metric $g_i$ converge to a subspace of the normal space $\nu_{x_\infty} L_\infty^+$ with respect to $g_\infty$. Since the dimensions are equal,  $\nu_{x_i} L_i^+$ converge to $\nu_{x_\infty}L_\infty^+$, and therefore also the tangent spaces $T_{x_i}L_i^+$ converge to $T_{x_\infty}L_\infty^+$.

To construct the desired diffeomorphisms $\alpha_i$, we first construct for all $i$ large enough $\mathcal C^1$-diffeomorphisms $\eta_i^+:M\to M$ that converge in the $\mathcal C^1$-topology to the identity, satisfy $\eta_i^+ (L_i^+)=L_\infty^+$ and have support in a small neighborhood of $L_\infty^+$. To this end, we first change coordinates:  we fix a $\mathcal C^1$-diffeomorphism  $\zeta :M\to M$ supported in an arbitrarily small neighborhood of $L_\infty^+$ such that $\bar L^+:=\zeta (L_\infty^+)$ is smooth (see e.g.\,\cite[Theorem 3.6]{Hir94} for the fact that such a $\zeta$ exists). Then the $\mathcal C^1$-submanifolds $\bar L_i^+:=\zeta (L^+_i)$ converge  in the Hausdorff-sense to $\bar L^+$, their tangent spaces at points in $\bar L_i^+$ converge to the corresponding tangent space at points in $\bar L^+$, and the volumes of $\bar L_i^+$ converge to the volume of $\bar L^+$. Now we smoothly identify a small tubular neighborhood of $\bar L^+$   with the smooth normal bundle. It follows that the foot-point projection of $\bar L_i^+$ to $\bar L^+$ is 
a $\mathcal C^1$ immersion, hence a covering map  for all large $i$.  Since this projection is almost $1$-Lipschitz, and the volumes converge, we deduce that this projection is injective.  Therefore, $\bar L_i^+$ can be uniquely  represented as a $\mathcal C^1$-section $h_i$ of the normal bundle of $\bar L^+$.  The convergence statement for the tangent spaces, implies that this sections  $h_i$ converge to the $0$-section in the $\mathcal C^1$-topology.  Using the sections $h_i$, we find $\mathcal{C}^1$-diffeomorphisms of the normal bundle that send $h_i$ to the $0$-section, are supported in a small neighbourhood of the $0$-section, and converge in the $\mathcal C^1$-topology. Extending these diffeomorphisms by the identity outside of the tubular neighbourhood of $\bar L^+$ and changing back coordinates using $\zeta^{-1}$, we obtain diffeomorphisms $\eta_i^+$ as claimed. Applying the same reasoning to $L_\infty^-$ and composing the resulting diffeomorphisms, we find diffeomorphisms $\eta_i:M\to M$ which converge in the $\mathcal C^1$-topology to the identity and satisfy
$\eta_i (L_i^\pm)=L_\infty^\pm$. Finally, we postcompose by a choice of a $\mathcal C^1$-diffeomorphism  supported in a neighborhood of  $L^{\pm}$ and $L^0$  in order to move $L^{\pm}$ to smooth submanifolds $\tilde L^{\pm}$ of $M$ (using e.g.\,\cite[Theorem 3.6]{Hir94}). Taking the inverse of the resulting $\mathcal C^1$-diffeomorphism yields diffeomorphisms $\alpha_i$ as in the claim.\end{proof}

\section{Examples of infinite families of foliations}\label{S:counter}
This section serves to prove \cref{T:counter-1,T:counter-2,T:counter-3}.

\begin{proof}[Proof of \cref{T:counter-1}]We use the perspective on isoparametric foliations on simply connected manifolds in terms of double disc bundles provided by \cref{lem:equivalent-chara,lem:sing-fib,cor:isoparam-char}. Consider the trivial disc bundles $D^+\coloneqq \DD^2\times \sphere^{n-2}\to \sphere^{n-2}$ and $D^-\coloneqq\sphere^1\times \DD^{n-1}\to \sphere^1$ equipped with the standard Euclidean metric. We identify $D^+\cup_{\mathrm{id}}D^-$ with $\sphere^n$ via the standard diffeomorphism that sends the zero-sections $L^+\subset D^+$ and $L^-\subset D^-$ to $\sphere^{n-2}\times\{0\}\subset \sphere^n$ and $\sphere^{1}=\{0\}\times \sphere^{1}\subset \sphere^n$ respectively, and the boundary $L^0=\partial D^\pm=\sphere^{n-2}\times \sphere^1$ to  ${\sphere^{n-2}({\sfrac{1}{ \sqrt{2}}})\times \sphere^{1}({\sfrac{1}{ \sqrt{2}}})\subset \sphere^n}$. Since $n\ge5$, by \cref{lem:diffeos1sn} below, there is an infinite family of mutually non-isotopic diffeomorphisms $\varphi_i\in \Diff(\partial D^+,\partial D^-)$ that are concordant to the identity. By \cref{P:foliated} (c), there are diffeomorphisms between the glued manifolds $M_i\coloneqq D^+\cup_{\varphi_i}D^-$ and $\sphere^{n}=D^+\cup_{\mathrm{id}}D^-$ that preserve the leaves $L^\pm$ and $L^0$, so it suffices to show that infinitely many of the $M_i$s are mutually not foliated diffeomorphic with respect to the standard foliation induced by the double disc bundle structures. This is by \cref{P:fol-diff} equivalent to showing that infinitely many of the $\varphi_i$ yield different classes in the double coset $H_-\backslash \pi_0(\Diff(\partial D^+, \partial D^-))/H_+$ for which it is enough to show that the groups $H_{\pm}=\mathrm{im}(\pi_0(\operatorname{Isom}_b(D^\pm))\ra \pi_0(\Diff(\partial D^\pm))$ are finite. To show this, we consider the fibration sequences
\begin{equation}\label{equ:fibrations-isom-groups}
\begin{tikzcd}[column sep=0.5cm, row sep=0.0cm]
\operatorname{Map}(\sphere^{n-2}, \oO(2))\rar& \operatorname{Isom}_b(D^+)\rar&\Diff(\sphere^{n-2})\\
\operatorname{Map}(\sphere^{1}, \oO(n-1))\rar& \operatorname{Isom}_b(D^-)\rar& \Diff(\sphere^{1})
\end{tikzcd}
\end{equation}
 induced by restricting to the diffeomorphisms on the base. For $m\ge1$, we have $\pi_0(\operatorname{Map}(\sphere^{m}, \oO(k)))=\pi_m(\oO(k))\rtimes \ZZ/2$, so as $\pi_m(O(2))=\pi_m(\sphere^1)=0$ for $m\ge2$ and  $\pi_1(O(m))=\ZZ/2$ for $m\ge3$, we see that the two mapping spaces in \eqref{equ:fibrations-isom-groups} have finitely many components. Since $\pi_0(\Diff(\sphere^{m}))$ is finite for $m\neq4$ by  \cref{lem:finitenessdiffeosphere} below, it follows from the long exact sequence in homotopy groups that $\pi_0(\operatorname{Isom}_b(D^\pm))$ and thus $H_{\pm}$ is finite for $n\neq6$. For $n=6$, the group $\pi_0(\Diff(\sphere^{n-2}))$ maybe be infinite, but since $\pi_0(\operatorname{Map}(\sphere^{n-2}, O(2)))$ is finite even for $n=6$ and $\partial D^{\pm}=\sphere^{n-2}\times  \sphere^1$, for finiteness of $H_{+}$ it actually suffices that that the image of  $(-)\times \sphere^1\colon \pi_0(\Diff(\sphere^{n-2}))\ra \pi_0(\Diff(\sphere^{n-2}\times  \sphere^1))$ is finite for $n=6$ which holds by \cref{lem:imagefinitenessdiffeo4sphere} below.
\end{proof}

\begin{proof}[Proof of \cref{T:counter-2}]We again use the perspective on isoparametric foliations on simply connected manifolds in terms of double disc bundles provided by \cref{lem:equivalent-chara,lem:sing-fib,cor:isoparam-char}. By \cite{Bel03} there are infinitely many metrics $g_i$ on $N\coloneqq \sphere^3\times \sphere^4\times \RR^5$ with non-negative sectional curvature, with mutually non-homeomorphic \emph{souls} $\Sigma_i$; these are closed submanifolds  of $N$ whose normal bundles $\nu_i\coloneqq \nu_{g_i}(\Sigma_i)$ are diffeomorphic to $N$.  In particular, $\Sigma_i$ and $\sphere^3\times \sphere^4$ are homotopy equivalent. Since the unit sphere bundle $\nu_i^{=1}$ is simply connected, \cref{lem:interior-unique} below ensures that there is a diffeomorphism $\varphi_i\colon \nu_i^{\le 1}\ra \sphere^3\times  \sphere^4\times \DD^5$. The double disc bundles $\smash{M_{i,j}\coloneqq  \nu^{\le 1}_{i}\cup_{\partial(\varphi_j^{-1}\circ\varphi_i)} \nu^{\le 1}_{j}}$ are thus diffeomorphic to $\sphere^3\times \sphere^4\times \sphere^5$ for all $i,j$ and the induced foliation has singular leaves $\Sigma_i$ and $\Sigma_j$ by construction. 
\end{proof}

\begin{remark}
Applying \cite{Gui98}, the  exotic double disc bundle structures on  $\sphere^3\times \sphere^4\times \sphere^5\cong M_{i,j}$ from the previous proof can be seen for $i=j$ to arise from submetries $P:(M_{i,i},g)\to[-1,1]$ with respect to a non-negatively curved metric $g$ on $M_{i,i}$.
\end{remark}

\begin{proof}[Proof of \cref{T:counter-3}]
Denote the Heisenberg group of upper triangular $3\times 3$ real matrices with $1$ on the diagonal by $\mathcal H$ and consider the homomorphism $h:\mathcal H\to \RR$ sending $A=(a_{ij})$ to $a_{12}$. For integers $m\ge 1$, we write $H_m\le \mathcal H$ for the subgroup of integers matrices $A$ such that $h(A)$ is divisible by $m$. The group $H_1$ is the standard lattice in $\mathcal H$, the integer Heisenberg group, and $H_m\le H_1$ is a normal subgroup of index $m$, so also uniform lattice in $\mathcal H$. The abelianization of $H_m$ is isomorphic  to $\mathbb Z^2\oplus \mathbb Z/m$, so in particular the groups $H_m$ are mutually non-isomorphic.

We consider the manifold $M= \sphere ^1\times \mathcal H/H_1$, whose universal covering $\tilde M$ is the nilpotent Lie group group $\mathbb R\times \mathcal H$. For all natural $m$, we consider the homomorphisms $\Phi_m:\mathcal H\to \RR \times \mathcal H,A\mapsto(\frac 1 m\cdot h(A), A)$.   The image $\mathcal H_m:=\Phi_m (\mathcal H)$ is  normal in $\mathbb R \times \mathcal H$. 

Fix a left-invariant Riemannian metric on $\tilde M$  and equip $M$ with the corresponding locally homogeneous metric. 
 The orbits of the left multiplication by $\mathcal H_m$ define a simple foliation of $\tilde M$ by $3$-dimensional translates of $\mathcal H_m$.  By the left-invariance of the metric, this foliation is Riemannian and each orbit has constant mean curvature. Since $\mathcal H_m\le \RR\times \mathcal H$ is normal, these orbits are permuted by the action of the larger group $\RR \times \mathcal H$ of isometries of $\tilde M$.  Therefore, the constant mean curvature of the orbits is not only constant on each orbit but this constant does not depend on the orbit.  Moreover, this simple foliation projects to a foliation $\mathcal F_m$ on $M$.

 Setting $K_m:=\mathcal H_m \cap (\ZZ\times  H_1)$, all fibers of $\mathcal F_m$ are diffeomorphic to $\mathcal H_m/K_m= \mathcal H /\Phi_m^{-1} (K_m)= \mathcal H/ \Phi_m^{-1} (\ZZ\times H_1).$ Moreover, by definition of $\Phi_m$, we see $\Phi_m^{-1} (\ZZ\times H)=H_m$. Hence, the fibers of $\mathcal F_m$ are diffeomorphic to  $\mathcal H/H_m$, so, the fibers of $\mathcal F_m$ have $H_m$ as fundamental groups. 

The space of leaves $X_m$ of $\mathcal F_m$ is canonically isometric to  the quotient of $\tilde M$ by the action of the group $\mathbb (\ZZ\times H_1)\cdot \mathcal H_m$. Hence, $X_m$  is the  circle of length $\frac 1 m$.   Thus, the leaves of $\mathcal F_m$ are  fibers of a Riemannian submersion 
$P_m:M\to X_m$.  As explained above, its fibers have fundamental groups $H_m$ and constant mean curvature, and this mean curvature does not depend on the fiber.  Taking the fiber with minimal volume, we deduce that  at this fiber the mean curvature is $0$, thus all fibers of $P_m$ are minimal.  Composing $P_m$ with an isoparametric function $g_m:X_m\to \RR$, we obtain an isoparametric function $f_m:=g_m\circ P_m:M\to \RR$ by  \cite[Proposition 2.4]{GT13}.
  The isoparametric foliation induced by $f_m$ is exactly $\mathcal F_m$.  But the fundamental groups of the fibers  of $\mathcal F_m$  are different, for different $m$.  This implies the claim for $\dim (M)=4$ and thus also in higher dimensions by taking products with a torus.
\end{proof}

\section{Topological background}\label{S:top-res}
In this final section, we collect the differential-topological results used in the previous sections. To high-dimensional topologists, essentially all of them are either clear or well-known; we merely explain how to extract them from the literature for more geometrically minded readers. We adopt the notation from \cref{S:conc-iso-diffeo}. In particular, for closed manifolds $M$ we write $\Diff(M)=\Diff(M,M)$ and $\Homeo(M)=\Homeo(M,M)$ for the topological groups of diffeomorphisms or homeomorphisms of $M$. If $M$ is compact but may have boundary, we require diffeomorphisms or homeomorphisms to agree with the identity in a neighbourhood of the boundary, which we indicate by a $\partial$-subscript. If we restrict to orientation-preserving diffeomorphisms or homeomorphisms, we add a $+$-superscript.

Several of the arguments below involve the commutative monoid $\Theta_n$ of closed oriented smooth manifolds homeomorphic $\sphere^n$, up to orientation-preserving diffeomorphism, with the monoid structure incuced by taking connected sums. Writing $\sphere^n=D^n_+\cup D^n_-$ for the decomposition of $\sphere^n$ into hemispheres, the monoid $\Theta_n$ is  related to various diffeomorphism groups by morphisms 
\begin{equation}\label{equ:diagram-section5}
\begin{tikzcd}[row sep=0.35cm, column sep=1cm]
\pi_0(\Diff_\partial(D_+^n))\rar{\mathrm{ext}_n}\arrow["p_n",two heads,d, near start]&\pi_0(\Diff^+(\sphere^n))\arrow["p_n",two heads,d,near start]&\\
\widetilde{\pi}_0(\Diff_\partial(D_+^n))\rar{\widetilde{\mathrm{ext}}_n}&\widetilde{\pi}_0(\Diff^+(\sphere^n))\rar{\mathrm{glue}_n}&\Theta_{n+1}.
\end{tikzcd}
\end{equation}
The maps $p_n$ are epimorphisms that send an isotopy class to its concordance class, the maps $\mathrm{ext}_n$ and $\widetilde{\mathrm{ext}}_n$ are induced by extending diffeomorphisms via $\mathrm{id}_{D^n_-}$, and the right map $\mathrm{glue}_n$ by sending a diffeomorphism $\varphi$ to $\smash{D^{n+1}_+\cup_{\varphi}D^{n+1}_-}$. 
\begin{lemma}\label{lem:diffeospheregroups}Consider the diagram \eqref{equ:diagram-section5}.
\begin{enumerate}
\item\label{enum:theta1} The maps $\mathrm{ext}_n$ and $\widetilde{\mathrm{ext}}_n$ are isomorphisms for all $n$. 
\item\label{enum:theta3}  The map $\mathrm{glue}_n$ is injective for all $n$ and surjective for $n\neq 3$.
\item\label{enum:theta2}  The maps $p_n$ are injective (and thus isomorphisms) for $n\neq 4$. 
\item\label{enum:theta4}  The monoid $\Theta_{n+1}$ is trivial for $n\le 5$ except possibly for $n=3$. For $n\ge6$ it is a finite abelian group.
\end{enumerate}
In particular, for $n\neq 3,4$, all groups in \eqref{equ:diagram-section5} are isomorphic and finite.
\end{lemma}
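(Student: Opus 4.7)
My plan is to derive each of the parts (i)--(iv) from well-documented classical results in high-dimensional topology and then assemble them. For (i), I would use the fibration sequence $\Diff_\partial(D^n_+)\to \Diff^+(\sphere^n)\to \mathrm{Emb}^+(D^n_-,\sphere^n)$ obtained by restricting to $D^n_-$. By Palais' disc theorem the base is homotopy equivalent to $SO(n+1)$, in particular connected, so by isotopy extension every $f\in\Diff^+(\sphere^n)$ is isotopic to one that is the identity on $D^n_-$, showing $\mathrm{ext}_n$ is surjective on $\pi_0$. Injectivity reduces to checking that the boundary map $\pi_1(SO(n+1))\to \pi_0(\Diff_\partial(D^n_+))$ vanishes, which holds because its generator is represented by a $2\pi$ frame rotation supported near the center of $D^n_+$ that can be null-isotoped in $\Diff_\partial(D^n_+)$ by a radial taper. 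The claim for $\widetilde{\mathrm{ext}}_n$ follows by passing to concordance classes. For (ii), injectivity of $\mathrm{glue}_n$ is a direct Alexander-trick argument: a concordance between $\varphi_0$ and $\varphi_1$ yields a diffeomorphism between the two twisted spheres. Surjectivity for $n\ne 3$ amounts to the statement that every smooth homotopy $(n{+}1)$-sphere is a twisted sphere, which is trivial for $n+1\le 3$ and follows from Smale's h-cobordism theorem applied to the complement of a smooth open disc for $n+1\ge 5$; the excluded case $n=3$ is the smooth Poincar\'e conjecture in dimension~$4$.

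Part (iii) is a direct application of Cerf's pseudoisotopy theorem: on a simply connected smooth manifold of dimension $\ge 5$ smooth concordance implies smooth isotopy, which applied to $\sphere^n$ gives injectivity of $p_n$ for $n\ge 5$; for $n\le 3$ the group $\pi_0(\Diff^+(\sphere^n))$ already vanishes (Smale for $n\le 2$, Hatcher for $n=3$), so $p_n$ is trivially injective, and the excluded case $n=4$ is the analogous open question. For (iv), $\Theta_{n+1}$ vanishes for $n+1\in\{1,2,3\}$ by classical results culminating in Moise and Perelman, is a finite abelian group for $n+1\ge 5$ by Kervaire--Milnor, and in addition vanishes for $n+1\in\{5,6\}$ by their explicit calculation, leaving only $\Theta_4$ open.

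Finally, for $n\ne 3, 4$ parts (i)--(iii) together say that every map in the diagram \eqref{equ:diagram-section5} is an isomorphism, so all four groups are canonically isomorphic to $\Theta_{n+1}$, which is finite by (iv). I expect the main obstacle to be less any individual step than carefully tracking the dimensions in which each classical theorem applies and verifying that the excluded cases across (i)--(iv) combine to exactly $n\in\{3,4\}$.
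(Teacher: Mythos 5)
Your parts (i) surjectivity, (iii) and (iv) follow the paper's route, but there is a genuine gap at the heart of (ii), and it propagates. The sentence ``a concordance between $\varphi_0$ and $\varphi_1$ yields a diffeomorphism between the two twisted spheres'' is the statement that $\mathrm{glue}_n$ is \emph{well defined} on concordance classes, not that it is injective. Injectivity is the converse implication: if $D^{n+1}_+\cup_{\varphi_0}D^{n+1}_-$ and $D^{n+1}_+\cup_{\varphi_1}D^{n+1}_-$ are orientation-preservingly diffeomorphic, then $\varphi_0$ and $\varphi_1$ are concordant. This is not an Alexander-trick triviality, and it is exactly where the paper works: one inserts a collar $\sphere^n\times[0,1]$, uses that the space of framed embedded $(n+1)$-discs in an oriented manifold is connected to arrange that the diffeomorphism between the two glued spheres fixes a standard disc $K$ running through both $(n+1)$-discs and the collar, and then reads off a concordance from the restriction to the complement of $K$. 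The same missing ingredient undercuts ``the claim for $\widetilde{\mathrm{ext}}_n$ follows by passing to concordance classes'': surjectivity of $\widetilde{\mathrm{ext}}_n$ does follow from (i), but injectivity does \emph{not} follow formally from injectivity of $\mathrm{ext}_n$ on isotopy classes; you must show that if $\mathrm{ext}_n(\varphi)$ is concordant to $\mathrm{id}_{\sphere^n}$ then $\varphi$ is concordant to the identity rel boundary. In the paper both injectivity statements are deduced at once from the injectivity of the composite $\widetilde{\pi}_0(\Diff_\partial(D^n_+))\to\Theta_{n+1}$ proved by the disc-fixing argument; without it, your concluding diagram chase (``all maps are isomorphisms'') is unsupported.

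Two smaller points. In (i) your injectivity argument genuinely differs from the paper's: the paper notes that the standard $\mathrm{SO}(n+1)$-action gives a section of $\Diff^+(\sphere^n)\to\mathrm{Emb}^+(D^n_-,\sphere^n)$ up to homotopy, so the boundary map vanishes with no computation. Your route---identify $\partial$ of the generator of $\pi_1(\mathrm{SO}(n+1))$ with a Dehn-type twist and untwist it---can be made to work because the generator is represented by a one-parameter rotation group, but as written it needs (a) the isotopy-extension computation identifying the boundary image with such a twist, which you assert rather than prove, and (b) care with where the $2\pi$-winding sits: scaling the rotation angle of a twist whose full turn occurs at the boundary end of the annulus is \emph{not} an isotopy rel $\partial D^n_+$ (near the boundary the intermediate maps are nontrivial rotations), so one must either place the winding at the inner end or taper up to the constant $2\pi$-rotation; the section argument sidesteps both issues. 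Finally, in (ii) the h-cobordism argument for surjectivity requires ambient dimension $n+1\ge 6$; for $n=4$ surjectivity instead follows from $\Theta_5=0$.
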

\begin{proof}
By the parametrised isotopy extension sequence (see e.g.\ \cite[Theorem 6.1.1]{WallDT}), restriction along $D^n_-\subset \sphere^n$ yields a fibration $\Diff^+(\sphere^n)\rightarrow \mathrm{Emb}^+(D^n_-,\sphere^n)$ to the space of orientation-preserving embeddings $D^n_-\hookrightarrow \sphere^n$. The fiber over the inclusion is $\Diff_\partial(D_+^n)$. Taking derivatives induces a homotopy equivalence $\mathrm{Emb}^+(D^n_-,\sphere^n)\simeq \mathrm{GL}^+_{n+1}(\mathbb{R})\simeq \mathrm{SO}(n+1)$, so the standard $\mathrm{SO}(n+1)$-action on $\sphere^n$ yields a section of $\Diff^+(\sphere^n)\rightarrow \mathrm{Emb}^+(D^n_-,\sphere^n)$ up to homotopy. The long exact sequence in homotopy groups then implies that $\mathrm{ext}_n$ is an isomorphism, so $\smash{\widetilde{\mathrm{ext}}_n}$ is surjective too. To show injectivity of $\widetilde{\mathrm{ext}}_n$, we prove that $\widetilde{\mathrm{ext}}_n\circ \mathrm{glue}_n$ is injective: let $[\varphi]$ be a class in the kernel, then there is an orientation-preserving diffeomorphism \[\hspace{-0.5cm}\smash{D^{n+1}_+\cup_{\mathrm{glue}(\varphi)}(\sphere^{n}\times [0,1])\cup_{\sphere^{n}\times \{1\}}D^{n+1}_-\overset{\Phi}{\cong} D^{n+1}_+\cup_{\mathrm{id}}(\sphere^{n}\times [0,1])\cup_{\sphere^{n}\times \{1\}}D^{n+1}_-.}\]
Note that both sides contain $\smash{K\coloneqq {D^{n+1}_+}\cup_{D^n_+\times \{0\}}(D^n_+\times [0,1])\cup_{ D^n_+\times \{1\}}D^{n+1}_-}$ as a submanifold. By combining, firstly,  that the latter is (up to smoothing corners) diffeomorphic to $\DD^{n+1}$ and, secondly, that $\mathrm{Emb}^+(\DD^{n+1},M)$ is connected for any oriented $(n+1)$-manifold $M$ because it is (by taking derivatives) homotopy equivalent to the frame bundle of $M$, we may assume that $\Phi$ agrees with the identity on $K$, so it induces a diffeomorphism between the complements. The latter yields a concordance between $\varphi$ and $\mathrm{id}$, so $\widetilde{\mathrm{ext}}_n\circ \mathrm{glue}_n$ is indeed injective. Together with the surjectivity of $\widetilde{\mathrm{ext}}_n$, this also implies injectivity of $\mathrm{glue}_n$. Surjectivity of $\mathrm{glue}_n$ for $n\neq 3$ is a consequence of Smale's $h$-cobordism theorem \cite[Propositions A and C]{Mil65}. This proves \ref{enum:theta1} and \ref{enum:theta3}. Part \ref{enum:theta2} holds by Cerf's pseudoisotopy-implies-isotopy theorem \cite{Cer68,Cer70}. Finally, $\Theta_{n+1}$ is trivial for $n=0,1$ by the classification of $1$- and $2$-manifolds, and for $n=2$ by the Poincaré conjecture. For $n\ge4$, it is a finite abelian group by work of Kervaire--Milnor who also proved that it is trivial for $n=4,5,6$ \cite[Theorem 1.1-1.2, Remark on p.\,505]{KM63} .
\end{proof}

As $\pi_0(\Diff^+(\sphere^n))\le \pi_0(\Diff(\sphere^n))$ has index two, \cref{lem:diffeospheregroups} implies that $\pi_0(\Diff(\sphere^n))$ is finite for $n\neq3,4$. As $\pi_0(\Diff^+(\sphere^n))$ is trivial for $n=3$ by \cite{Cer68}, we get:

\begin{corollary}\label{lem:finitenessdiffeosphere}$\pi_0(\Diff(\sphere^n))$ is finite for all $n\neq 4$.
\end{corollary}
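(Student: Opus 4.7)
The plan is to combine Lemma~\ref{lem:diffeospheregroups} with the elementary observation that $\pi_0(\Diff^+(\sphere^n))$ sits as a subgroup of index exactly two in $\pi_0(\Diff(\sphere^n))$---the inclusion is proper because any reflection on $\sphere^n$ is orientation-reversing, and the index is at most two since the orientation sign gives a surjective homomorphism $\pi_0(\Diff(\sphere^n))\to \ZZ/2$. Thus finiteness of $\pi_0(\Diff(\sphere^n))$ reduces to finiteness of $\pi_0(\Diff^+(\sphere^n))$ for each $n\neq 4$.

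For $n\neq 3,4$, Lemma~\ref{lem:diffeospheregroups} gives this for free: by parts (ii) and (iii), the composition $\mathrm{glue}_n\circ p_n\colon \pi_0(\Diff^+(\sphere^n))\to \Theta_{n+1}$ is an isomorphism, and by part (iv) the target $\Theta_{n+1}$ is finite (in fact trivial for $n\le 5$ with $n\neq 3$, and a finite abelian group for $n\ge 6$).

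The remaining case is $n=3$. Here the diagram alone is not enough: the composition $\mathrm{glue}_3\circ p_3$ is still injective, but its target $\Theta_4$ is precisely the one exception in part (iv) of Lemma~\ref{lem:diffeospheregroups} whose finiteness is equivalent to the smooth four-dimensional Poincaré conjecture and therefore unavailable. I would instead invoke Cerf's pseudoisotopy work~\cite{Cer68}, already cited in the proof of Lemma~\ref{lem:diffeospheregroups}, which asserts that $\pi_0(\Diff^+(\sphere^3))$ is trivial; combined with the index-two observation this yields $|\pi_0(\Diff(\sphere^3))|=2$, completing the argument. The only point of genuine depth is therefore this appeal to Cerf for $n=3$; everything else is a formal consequence of the diagram assembled in Lemma~\ref{lem:diffeospheregroups}.
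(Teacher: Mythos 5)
Your proof is correct and takes essentially the same route as the paper: the index-two reduction to $\pi_0(\Diff^+(\sphere^n))$, finiteness for $n\neq 3,4$ extracted from \cref{lem:diffeospheregroups}, and Cerf's theorem that $\pi_0(\Diff^+(\sphere^3))$ is trivial for the case $n=3$. (Your aside that finiteness of $\Theta_4$ is \emph{equivalent} to the smooth four-dimensional Poincar\'e conjecture slightly overstates matters---finiteness is a priori weaker than triviality---but this plays no role in the argument.)
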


\cref{lem:finitenessdiffeosphere} is not known for $n=4$, but the following suffices for our applications:

\begin{lemma}\label{lem:imagefinitenessdiffeo4sphere}The image of $\smash{\pi_0(\Diff^+(\sphere^4))\xrightarrow{(-)\times \sphere^1} \pi_0(\Diff^+(\sphere^4\times \sphere^1))}$ is trivial.
\end{lemma}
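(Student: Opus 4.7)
The plan is to combine the vanishing of $\widetilde{\pi}_0(\Diff^+(\sphere^4))$ which we read off from \cref{lem:diffeospheregroups} with a classical ``concordance-implies-isotopy after $\times \sphere^1$'' argument.

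First, by parts \ref{enum:theta3} and \ref{enum:theta4} of \cref{lem:diffeospheregroups}, the morphism $\mathrm{glue}_4\colon \widetilde{\pi}_0(\Diff^+(\sphere^4)) \hookrightarrow \Theta_5$ is injective with trivial target, so $\widetilde{\pi}_0(\Diff^+(\sphere^4)) = 0$. Consequently, any $\varphi \in \Diff^+(\sphere^4)$ is smoothly concordant to the identity: there is a diffeomorphism $\Psi\colon \sphere^4 \times [0,1] \to \sphere^4 \times [0,1]$ with $\Psi|_{\sphere^4 \times \{0\}} = \id$ and $\Psi|_{\sphere^4 \times \{1\}} = \varphi$, which after a $\mathcal{C}^\infty$-small perturbation we may assume is the product $\id \times \id$ in a neighbourhood of $\sphere^4 \times \{0\}$ and $\varphi \times \id$ in a neighbourhood of $\sphere^4 \times \{1\}$.

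Using $\Psi$, we build an isotopy on $\sphere^4 \times \sphere^1$ from $\id$ to $\varphi \times \id_{\sphere^1}$. Extend $\Psi$ to a smooth diffeomorphism $\tilde\Psi\colon \sphere^4 \times \RR \to \sphere^4 \times \RR$ by the identity on $\sphere^4 \times (-\infty,0]$ and by $\varphi \times \id$ on $\sphere^4 \times [1,\infty)$. Writing $T_s$ for translation of the $\RR$-factor by $s$ and fixing a large integer $N$, the commutator family $F_s := \tilde\Psi \circ T_s \circ \tilde\Psi^{-1} \circ T_s^{-1}$ for $s \in [0,N-2]$ consists of compactly supported diffeomorphisms of $\sphere^4 \times \RR$ with support contained in $\sphere^4 \times [0,s+1]$, and therefore descends to a smooth isotopy of $\sphere^4 \times \sphere^1 = \sphere^4 \times (\RR/N\ZZ)$ from $F_0 = \id$ to $F_{N-2}$. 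A direct calculation using the boundary conditions of $\tilde\Psi$ shows that $F_{N-2}$ agrees with $\varphi \times \id$ on the large sub-arc $[1,N-2] \subset \sphere^1$, differing from it only on two short ``transition'' arcs of bounded length.

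The principal remaining step, and likely the main technical obstacle, is to extend this isotopy so as to equal $\varphi \times \id_{\sphere^1}$ on the entire circle. The residual discrepancy $F_{N-2} \cdot (\varphi \times \id)^{-1}$ is supported in the two transition arcs and there is given by a diffeomorphism built out of $\Psi$ itself; one can straighten it by a further local isotopy, constructed by applying the same sliding-pseudo-isotopy argument to each transition arc using the product-near-boundary structure of $\Psi$. Concatenating yields the desired isotopy from $\id$ to $\varphi \times \id_{\sphere^1}$, proving the lemma.
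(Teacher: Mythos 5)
Your first two steps are correct and correctly sourced: $\widetilde{\pi}_0(\Diff^+(\sphere^4))=0$ does follow from \cref{lem:diffeospheregroups}, and the commutator/sliding construction with $F_s=\tilde\Psi \circ T_s\circ\tilde\Psi^{-1}\circ T_s^{-1}$ genuinely shows that $\varphi\times\id_{\sphere^1}$ is isotopic to a diffeomorphism supported in $\sphere^4\times A$ for an arc $A\subset\sphere^1$ (on $A$ it is assembled from $\Psi$, a translated conjugate of $\Psi^{-1}$, and $\varphi^{-1}\times\id$). The genuine gap is the final ``straightening'' step, which is not a technicality but the entire content of the lemma. The residual discrepancy is a rel-boundary diffeomorphism of $\sphere^4\times[0,1]$ built out of the concordance $\Psi$ --- in effect a pseudoisotopy-type object on a $5$-manifold over a $4$-manifold --- and ``applying the same sliding argument to each transition arc'' cannot kill it: sliding around the circle only conjugates the discrepancy by rotations (which is exactly how it was produced), and afterwards you are left with an object of the same kind. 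Note also that if the purely formal part of your argument sufficed, it would prove for \emph{every} closed manifold $M$, in every dimension, that a diffeomorphism concordant to the identity becomes isotopic to the identity after crossing with $\sphere^1$; statements of this type are not formal consequences of concordance-triviality (in high dimensions they require pseudoisotopy machinery, and in the present borderline case smooth pseudoisotopy theory over a $4$-manifold is unavailable). So triviality of your discrepancy, even merely in $\pi_0(\Diff(\sphere^4\times\sphere^1))$, does not follow from $\widetilde{\pi}_0(\Diff^+(\sphere^4))=0$ alone; an additional input is needed.

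The paper supplies exactly such an input by a different reduction: using the isomorphism $\mathrm{ext}_4$ from \cref{lem:diffeospheregroups} it passes to the map $\pi_0(\Diff_\partial(D_+^4))\to\pi_0(\Diff_\partial(D_+^4\times\sphere^1))$, proves via isotopy extension that restriction to $\pi_0(\mathrm{Emb}_\partial(D_+^4\times *,\,D_+^4\times\sphere^1))$ is injective because the relevant fibers are controlled by $\pi_0(\Diff_\partial(D^5))=0$ --- a genuinely $5$-dimensional fact resting on Cerf's pseudoisotopy theorem and $\Theta_6=0$ --- and only then invokes concordance-invariance of the embedding class together with $\widetilde{\pi}_0(\Diff_\partial(D_+^4))=0$. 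Your proposal uses only the second of these two inputs. To close your gap you would need a substitute for $\pi_0(\Diff_\partial(D^5))=0$ that trivializes a rel-boundary diffeomorphism of $\sphere^4\times[0,1]$ inside $\sphere^4\times\sphere^1$; making that precise essentially forces you back to the paper's embedding-space argument.
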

\begin{proof}
Since $\mathrm{ext}_4$ from \eqref{equ:diagram-section5} is an isomorphism by \cref{lem:diffeospheregroups}, it suffices to show that the composition $\pi_0(\Diff_\partial(D_+^4))\rightarrow \pi_0(\Diff_\partial(D_+^4\times \sphere^1))\rightarrow  \pi_0(\Diff^+(\sphere^4\times \sphere^1))$ is trivial. We will see that this is already the case for the first map. Fix a basepoint $*\in D^1_+\subset \sphere^1$ and consider the restriction map $\pi_0(\Diff_\partial(D_+^4\times \sphere^1))\rightarrow \pi_0(\mathrm{Emb}_\partial(D_+^4\times *,D_+^4\times \sphere^1 ))$ to the set of isotopy classes of embeddings that agree with the inclusion $\mathrm{inc}\colon D_+^4\times *\subset D_+^4\times \sphere^1$ near the boundary. By isotopy extension, the fibers of this map are either empty or admit a surjection from $\pi_0(\Diff_\partial(D_+^4\times D^1_-))\cong \pi_0(\Diff_\partial(D^5))$ which is trivial by \cref{lem:diffeospheregroups}, so $\pi_0(\Diff_\partial(D_+^4\times \sphere^1))\rightarrow \pi_0(\mathrm{Emb}_\partial(D_+^4\times *,D_+^4\times \sphere^1 ))$ is injective. It thus suffices to show that the map $\pi_0(\Diff_\partial(D_+^4))\rightarrow  \pi_0(\mathrm{Emb}_\partial(D_+^4\times *,D_+^4\times \sphere^1 ))$ which sends $[\varphi]\in \pi_0(\Diff_\partial(D_+^4))$ to $[\mathrm{inc}\circ\varphi]$ is constant. By identifying $D_+^1\subset \sphere^1$ with $[0,1]$ one sees that this maps sends concordant diffeomorphisms to the same class of embeddings, so it factors over the group $\widetilde{\pi}_0(\Diff_\partial(D_+^4))$. The latter is trivial by \cref{lem:diffeospheregroups}, so the claim follows.
\end{proof}

\begin{lemma}\label{lem:diffeos1sn}
For $n\ge3$, the kernel of the morphism $\pi_0(\Diff(\sphere^1\times \sphere^n))\rightarrow \widetilde{\pi}_0(\Diff(\sphere^1\times \sphere^n))$ induced by sending a diffeomorphism to its concordance class is infinite.
\end{lemma}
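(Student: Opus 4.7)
My plan is to identify the kernel in question with the image of the top-face restriction from the smooth pseudoisotopy space, and then to produce infinitely many classes by invoking Waldhausen's algebraic $K$-theory of spaces together with Igusa's concordance stability theorem.

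More precisely, set $M = \sphere^1 \times \sphere^n$ and let $P(M) = \Diff(M \times [0,1], M \times \{0\})$ denote the pseudoisotopy space, consisting of diffeomorphisms of $M\times [0,1]$ that are the identity on $M\times\{0\}$. The top-face restriction $r \colon P(M) \to \Diff(M)$, $r(\Phi) = \Phi|_{M\times\{1\}}$, has the property that a diffeomorphism $\phi$ of $M$ is smoothly concordant to the identity if and only if $\phi = r(\Phi)$ for some $\Phi \in P(M)$. Consequently, the kernel of the concordance map in the lemma is precisely $r_*(\pi_0 P(M))$, and it suffices to prove that this image is infinite.

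To exhibit infinitely many classes in $\pi_0 P(M)$ whose images under $r_*$ remain distinct, I would combine Waldhausen's identification of the stable pseudoisotopy spectrum with the smooth Whitehead spectrum $\mathrm{Wh}^{\mathrm{Diff}}(M)$ and the Bass--Heller--Swan-type splitting
\[
A(X \times S^1) \simeq A(X) \oplus \Sigma A(X) \oplus \widetilde{\mathrm{Nil}}(A(X))^{\oplus 2}.
\]
Applied with $X = \sphere^n$, the $\widetilde{\mathrm{Nil}}$-summands contribute an infinitely generated group to $\pi_2 \mathrm{Wh}^{\mathrm{Diff}}(\sphere^1 \times \sphere^n)$, and hence to the stable $\pi_0 P^S(M)$. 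Igusa's concordance stability theorem then transports this infinitude to the unstable $\pi_0 P(M)$ for $n$ in an appropriate stable range. To conclude that this infinite family is preserved by $r_*$, I would use a detection invariant for $\pi_0 P(M)$ that is visible on the top face alone---for instance, a trace-type construction or scan-map whose value on $\Phi$ depends only on $r(\Phi)$.

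The main obstacle is the low-dimensional range (roughly $n = 3, 4, 5$) in which Igusa's stability threshold is not met; it is also the step where the most delicate choice of invariant is required. In this range, I would substitute explicit constructions. For $n = 3$, the barbell diffeomorphisms of Budney--Gabai furnish an infinite family of pairwise non-isotopic self-diffeomorphisms of $\sphere^1 \times \DD^3$ rel boundary that are all concordant to the identity; these extend by the identity to $\Diff(\sphere^1 \times \sphere^3)$ and, after taking products with the identity on discs, cover the remaining small cases, while the $A$-theoretic Waldhausen argument handles all sufficiently large $n$.
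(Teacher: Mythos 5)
Your identification of the kernel with the image of $r_*\colon\pi_0 P(\sphere^1\times\sphere^n)\to\pi_0(\Diff(\sphere^1\times\sphere^n))$ is correct, and the $K$-theoretic source of infinitude you point to (the $\pi_1=\ZZ$ Nil-/$Wh_1(\ZZ;\ZZ/2)$-type contribution to pseudoisotopy) is indeed the classical mechanism underlying this lemma. But as written the argument has two genuine gaps. First, the decisive step---showing that infinitely many of these pseudoisotopy classes restrict to pairwise \emph{non-isotopic} diffeomorphisms on the top face---is exactly where you wave your hands (``a trace-type construction or scan-map whose value depends only on $r(\Phi)$''). The image of $r_*$ is the quotient of $\pi_0 P(M)$ by the inertial classes, i.e.\ those pseudoisotopies whose top face is isotopic to the identity, and controlling this quotient is the hard part; it requires the parametrized-surgery/involution analysis of Hsiang--Sharpe. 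Infinitude of $\pi_0 P(M)$, or of $\pi_2\mathrm{Wh}^{\mathrm{Diff}}(M)$, does not by itself give infinitude of the kernel. Second, the range: Igusa's stability gives $\pi_0 P(M)\cong\pi_0 P^S(M)$ only for $\dim M\geq 7$, i.e.\ $n\geq 6$, so $n=5$ is not covered by your stable argument; and your fallback for $n=4,5$---stabilizing the Budney--Gabai classes by taking products with discs---does not work as stated. Concordance to the identity is preserved under such stabilization, but pairwise non-isotopy is not known to be: the Budney--Gabai invariants are not known to survive stabilization, so you cannot conclude that the stabilized family remains infinite in $\pi_0(\Diff(\sphere^1\times\sphere^4))$ or $\pi_0(\Diff(\sphere^1\times\sphere^5))$.

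For comparison, the paper's proof is a short counting argument: since $\widetilde{\pi}_0(\Diff(\sphere^1\times\sphere^n))$ is finite for $n\geq 3$ by Sato, the kernel has finite index, so it suffices that $\pi_0(\Diff(\sphere^1\times\sphere^n))$ is infinite; this is quoted from Hsiang--Sharpe for $n\geq 5$, from the surjectivity part of Hatcher--Wagoner in dimension $5$ combined with the Hsiang--Sharpe argument for $n=4$, and from Budney--Gabai for $n=3$. If you want to keep your route, you should replace the missing detection step by the Hsiang--Sharpe analysis (or simply invoke Sato's finiteness and their computation of $\pi_0(\Diff(\sphere^1\times\sphere^n))$), and treat $n=4,5$ by the unstable Hatcher--Wagoner surjectivity in dimensions $5$ and $6$ rather than by stabilizing four-dimensional classes.
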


\begin{proof}The group $\widetilde{\pi}_0(\Diff(\sphere^1\times \sphere^n))$ is finite for $n\ge3$ (see e.g.\,\cite[Theorem III]{Sat69}), so it suffices to show that $\pi_0(\Diff(\sphere^1\times \sphere^n))$ is infinite. For $n\ge5$, this follows from \cite[Example 1, p.\,408]{HS76}. Combining their argument with the surjectivity part of \cite[Theorem, p.\,11]{HW73} in dimension $5$ also gives the case $n=4$. Finally, the case $n=3$ holds by \cite[Theorem 3.15]{BG21}.
\end{proof}

\begin{remark}
For $n\le 2$, \cref{lem:diffeos1sn} fails: for $n=2$, the group $\pi_0(\Diff(\sphere^1\times \sphere^n))$ is finite by \cite[Theorem 5.1]{Glu62} and for $n\le 1$, it follows from the Dehn-Nielsen theorem that any diffeomorphism concordant to the identity (so in particular homotopic to the identity) is also isotopic to the identity.
\end{remark}

\begin{lemma}\label{L:black-magic}
For a closed smooth manifold $M$ of dimension $n\neq 4$, the forgetful map $\pi_0(\Diff(M))\to \pi_0(\Homeo(M))$ has finite kernel.
\end{lemma}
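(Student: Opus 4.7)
The plan is to handle low and high dimensions separately. For $n \leq 3$, I would argue that the forgetful map $\pi_0(\Diff(M)) \to \pi_0(\Homeo(M))$ is itself an isomorphism, so its kernel is automatically trivial: this is classical for $n \leq 2$ by the Dehn-Nielsen theorem combined with the uniqueness of smooth structures on surfaces, and for $n = 3$ it follows from Moise's theorem together with Cerf's pseudoisotopy theorem and Hatcher's resolution of the Smale conjecture, which together ensure that two diffeomorphisms of a closed $3$-manifold that are topologically isotopic are already smoothly isotopic.

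For $n \geq 5$, I would reduce the problem to a $\pi_1$-finiteness question via the fibration sequence
\[
\Diff(M) \hookrightarrow \Homeo(M) \longrightarrow \Homeo(M)/\Diff(M).
\]
The associated long exact sequence of homotopy groups exhibits the kernel of the forgetful map $\pi_0(\Diff(M)) \to \pi_0(\Homeo(M))$ as a quotient of $\pi_1(\Homeo(M)/\Diff(M))$, so it suffices to show that this group is finite. For this I would invoke the smoothing theorem of Kirby-Siebenmann in its parametrised form due to Morlet (and elaborated by Burghelea-Lashof): for $n \geq 5$, the space $\Homeo(M)/\Diff(M)$ is weakly equivalent to the space $\Gamma(E)$ of sections of a fiber bundle $E \to M$ with fiber $\mathrm{Top}(n)/\mathrm{O}(n)$. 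The homotopy groups of this fiber are finite in every degree for $n \geq 5$, since stably they agree with $\pi_\ast(\mathrm{Top}/\mathrm{O})$, which is identified by Kirby-Siebenmann with the finite Kervaire-Milnor groups $\Theta_\ast$ of exotic spheres (with the low-degree exceptional groups also finite). An obstruction-theoretic argument walking up the Postnikov tower of the fiber then presents $\pi_1(\Gamma(E))$ by a finite filtration whose subquotients inject into the finite cohomology groups $H^i(M; \pi_{i+1}(\mathrm{Top}(n)/\mathrm{O}(n)))$, yielding the desired finiteness.

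The main obstacle, and the reason dimension four must be excluded, is the smoothing step: the Kirby-Siebenmann classification of smoothings fails in dimension four (owing in particular to the existence of exotic smoothings of $\mathbb{R}^4$), so the reduction to a space of sections of a bundle with finite-homotopy-group fiber is not available there.
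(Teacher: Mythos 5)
Your proof is correct and, in the main case $n\ge5$, is essentially the paper's argument in different packaging: both reduce via the long exact sequence to finiteness of the fundamental group of a smoothing-theoretic section/lifting space (the paper uses the Kirby--Siebenmann description of the space of smooth structures, you use the Morlet--Burghelea--Lashof formulation $\Homeo(M)/\Diff(M)\simeq\Gamma(E)$ with fiber $\mathrm{Top}(n)/\mathrm{O}(n)=\Homeo(\RR^n)/\Diff(\RR^n)$), and both conclude by obstruction theory over the $n$-dimensional complex $M$ using finiteness of the relevant homotopy groups of the fiber; your treatment of $n\le 3$ by classical low-dimensional results replaces the paper's uniform observation that the fiber is contractible there, but reaches the same conclusion. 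One small correction: the homotopy groups of $\mathrm{Top}(n)/\mathrm{O}(n)$ are \emph{not} finite in every degree (beyond the stable range they are $\pi_{*-n-1}(\Diff_\partial(\DD^n))$ by Morlet's theorem, which is rationally nontrivial for suitable $n$ and $*$ by Farrell--Hsiang); however, your obstruction argument only uses degrees $\le n+1$, where the stabilization isomorphism identifies them with the finite groups $\Theta_*$ (plus finite low-degree exceptions), which is exactly the range statement the paper cites from Kirby--Siebenmann.
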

   
\begin{proof}By the long exact sequence in homotopy groups, it suffices to show that the relative homotopy group $\pi_1(\Homeo(M),\Diff(M);\mathrm{id})$ is finite. The latter is isomorphic to the fundamental group $\pi_1(\mathrm{Sm}(M))$ of the space of smooth structures on $M$ based at the given smooth structure (see \cite[p.\,217]{KS77}). Moreover, by Theorem 2.3 on p.\,235 loc.cit., the space $\mathrm{Sm}(M)$ is homotopy equivalent to the space of lifts of a fixed classifying map $M\rightarrow \mathrm{BHomeo}(\mathbb{R}^d)$ of the topological tangent bundle of $M$ along a model for the forgetful map $\varphi\colon \mathrm{BDiff}(\mathbb{R}^n)\ra \mathrm{BHomeo}(\mathbb{R}^n)$ as a fibration. To show that $\pi_1(\mathrm{Sm}(M))$ is finite, it is by obstruction theory therefore enough to show that the cohomolopy groups $\oH^i(M;\pi_j(\Homeo(\mathbb{R}^n)/\Diff(\mathbb{R}^n)))$ are finite whenever $j=i$ or $j=i+1$; here $\Homeo(\mathbb{R}^n)/\Diff(\mathbb{R}^n)$ is the fiber of $\varphi$. Since closed smooth $n$-manifolds admit finite $n$-dimensional CW-structures, this follows from showing that the group $\pi_i(\Homeo(\mathbb{R}^n)/\Diff(\mathbb{R}^n))$ is finite for $i\le n+1$. For $n\le 3$, these groups vanish by 5.0 (6) \& (7) on p.\,246 loc.cit.. For $n\ge5$, taking products with $\mathbb{R}$ induces an isomorphism $\pi_i(\Homeo(\mathbb{R}^n)/\Diff(\mathbb{R}^n))\cong \pi_i(\Homeo(\mathbb{R}^{n+1})/\Diff(\mathbb{R}^{n+1}))$  for $i\le n+1$ by 5.0 (3) on p.\,246 loc.cit.. Moreover, in this range this group is by 5.0 (5) on p.\,246 loc.cit.\,isomorphic to the finite group $\Theta_i$ featuring in \cref{lem:diffeospheregroups}.
\end{proof}

 \begin{remark}\label{R:when-fail}
\cref{L:black-magic} fails for $n=4$. For example, it follows from works of Quinn and Ruberman that there are $k,l\ge0$ such that for the connected sum $M=(\mathbb{C}P^2)^{\sharp k}\sharp(\overline{\mathbb{C}P^2})^{\sharp  l}$, the kernel of $\pi_0(\Diff(M))\to \pi_0(\Homeo(M))$ is not even finitely generated: by \cite[Theorem A]{Rub98}, the kernel of the action $\pi_0(\Diff(M))\ra \mathrm{Aut}(H_2(M))$ on second homology is not finitely generated, and by \cite[Theorem 1.1]{Qui86} this kernel agrees with the kernel of $\pi_0(\Diff(M))\ra \pi_0(\Homeo(M))$.
\end{remark}

\begin{lemma}\label{lem:interior-unique}Two compact smooth manifolds $M_0$ and $M_1$ of dimension $n\neq 4,5$ with simply-connected boundaries $\partial M_0$ and $\partial M_1$ are diffeomorphic if and only if their interiors $\mathrm{int}(M_0)$ and $\mathrm{int}(M_1)$ are diffeomorphic.
\end{lemma}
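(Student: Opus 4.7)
Since the ``only if'' direction is immediate, the task is to show that a diffeomorphism $\phi\colon \mathrm{int}(M_0)\ra \mathrm{int}(M_1)$ can be promoted to a diffeomorphism $M_0\cong M_1$. The plan is to argue by cases on $n$.

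For $n\le 3$ I would proceed via classification. Simply-connectedness forces each $\partial M_i$ to be diffeomorphic to $\sphere^{n-1}$ (trivially for $n\le 2$ and by the classification of closed surfaces for $n=3$). Capping off by a disc $\DD^n$ produces closed manifolds $\hat M_i$ with $\mathrm{int}(M_i)\cong \hat M_i\setminus \{\mathrm{pt}\}$. Since any two smooth embeddings of a closed disc into a connected smooth manifold of dimension $\le 3$ are smoothly isotopic (up to reflection), the diffeomorphism $\phi$ extends to a diffeomorphism $\hat M_0\cong \hat M_1$, and restricting back would yield $M_0\cong M_1$.

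For $n\ge 6$ I would invoke Siebenmann's end theorem from his 1965 Princeton thesis. Setting $W\coloneqq \mathrm{int}(M_0)\cong \mathrm{int}(M_1)$, each boundary component of $M_i$ corresponds to a tame end $e$ of $W$ whose fundamental pro-group at infinity equals the fundamental group of the corresponding component of $\partial M_i$, and hence is trivial by hypothesis. Siebenmann's theorem asserts that in dimension $n\ge 6$ the set of smooth completions of an open manifold with tame ends, taken modulo diffeomorphism rel a compact neighborhood of the ends, is (if non-empty) a torsor over $\bigoplus_e \mathrm{Wh}(\pi_1^\infty(e))$. In our case each summand is $\mathrm{Wh}(1)=0$, so the completion is unique, whence $M_0\cong M_1$. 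The main obstacle, and the reason for excluding $n=5$, is the dimension restriction $n\ge 6$ in Siebenmann's theorem, which ultimately comes from the smooth $s$-cobordism theorem; for $n=4$ the usual smooth-category pathologies preclude this line of argument altogether.
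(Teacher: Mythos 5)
Your proposal is correct, but it reaches the conclusion by a different route than the paper. For $n\ge 6$ you invoke Siebenmann's end theory: both $M_0$ and $M_1$ are completions of the open manifold $W=\mathrm{int}(M_0)$, whose ends are collared and hence tame with trivial fundamental group at infinity, and the uniqueness part of that theory (two completions of the same open manifold differ by attaching an $h$-cobordism to the boundary, measured by Whitehead torsion) gives $M_0\cong M_1$ since $\mathrm{Wh}(1)=0$. The paper instead proves exactly this special case directly: it uses the interior diffeomorphism and a collar to embed $M_0\setminus(\partial M_0\times[0,1])$ into $M_1$, shows the complementary region $K$ is a simply connected $h$-cobordism from $\partial M_1$ to $\partial M_0$ (the inclusions are isotopy equivalences, then Seifert--van Kampen and excision), and applies the $h$-cobordism theorem. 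The underlying mechanism is therefore the same---the difference of two completions is an $h$-cobordism on the boundary---but the paper's argument is short and self-contained given the $h$-cobordism theorem, whereas yours outsources it to a heavier citation; in exchange, the statement you quote is more general and makes the role of $\mathrm{Wh}(\pi_1\partial)$ explicit. Your explanation of why $n=5$ is excluded matches the paper's subsequent remark, which exhibits the failure via invertible $h$-cobordisms.

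Two minor caveats. In your $n\le 3$ case, the diffeomorphism of punctured manifolds does not literally extend smoothly over the puncture: it extends as a homeomorphism of the one-point compactifications (the unique end must go to the unique end), after which one invokes smoothing in dimension $\le 3$ and then the disc theorem to remove balls---as written, the disc-isotopy fact you cite justifies this last step, not the extension. Second, ``modulo diffeomorphism rel a compact neighborhood of the ends'' is not quite the right equivalence for a torsor statement (up to plain diffeomorphism one only gets a quotient of the Whitehead groups by involution and automorphism actions), but all you need is that vanishing Whitehead groups force uniqueness of the completion up to diffeomorphism, which is correct.
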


\begin{proof}The ``only if'' direction follows since any diffeomorphism restricts to a diffeomorphism between the interiors by invariance of domain. To prove the ``if'' direction, fix a diffeomorphism $ \mathrm{int}(M_0)\cong \mathrm{int}(M_1)$ and smooth collars $\partial M_i\times [0,2)\subset M_i$ of $\partial M_i\times\{0\}=\partial M_i$ in $M_i$ for $i=0,1$. Viewing $M_0\backslash (\partial M_0\times[0,1])$ as a submanifold of $M_1$ via the embedding \begin{equation}\label{equ:composed-emb}M_0\backslash (\partial M_0\times[0,1])\subset \mathrm{int}(M_0)\cong\mathrm{int}(M_1)\subset M_1,\end{equation} 
the complement $K\coloneqq  M_1\backslash (M_0\backslash (\partial M_0\times[0,1]))$ is a cobordism 
from $\partial M_1$ to $\partial M_0$. It suffices to show that $K$ is an $h$-cobordism since this implies by the $h$-cobordism theorem that there is a diffeomorphism $K\cong \partial M_0\times [0,1]$ extending the identity on $\partial M_0=\partial M_0\times\{1\}$ which in turn gives the claim: \[M_1= K\cup_{\partial M_0 \times 1}(M_0\backslash (\partial M_0\times[0,1))) \cong  \partial_0M\times [0,1]\cup_{\partial M_0 \times 1} (M_0\backslash (\partial M_0\times[0,1)))= M_0.\]
Since the $\partial M_i$ are simply connected, for $K$ to be an h-cobordism, it suffices that $K$ is simply connected and the relative homology group $\oH_*(K,\partial_0M)$ vanishes in all degrees. To see this, note that both inclusions in \eqref{equ:composed-emb} are isotopy equivalences, i.e.\,admit  embeddings in the other direction that are two-sided inverses up to isotopy of embeddings (to see this, push $M_i$ into their interiors using the collar). In particular, the composition \eqref{equ:composed-emb} is a homotopy equivalence. This implies, firstly, that $K$ is simply connected by applying the Seifert--van--Kampen theorem to $M_1=K\cup_{\partial M_0 \times 1}(M_0\backslash (\partial M_0\times[0,1)))$, and secondly that the groups  $\oH_*(M_1, M_0\backslash \partial M_0\times[0,1))$ vanish. The latter are isomorphic to $\oH_*(K, \partial M_0)$ by excision, so the claim follows.\end{proof}

\begin{remark}\cref{lem:interior-unique} fails for $n=5$: consider two non-diffeomorphic homotopy equivalent simply-connected closed smooth $4$-manifolds $P_0$ and $P_1$ with vanishing signature, choose a compact $5$-manifold $M_0$ that bounds $P_0$ (this exists since the signature vanishes), use \cite[Theorem 2]{Wal64} to find an $h$-cobordism $W$ between $P_0$ and $P_1$, and set $M_1\coloneqq  M_0\cup_{P_0}W$. Then $M_0\not\cong M_1$ since $P_0\not\cong P_1$, but $\mathrm{int}(M_0)\cong \mathrm{int}(M_1)$ since $W\backslash P_1\cong P_0\times [0,1)$ by a ``swindle'', using that $W$ is an invertible cobordism; see \cite[Theorem 4]{Sta65}.  
\end{remark}

\bibliographystyle{amsalpha}
%\bibliography{literature}
\providecommand{\bysame}{\leavevmode\hbox to3em{\hrulefill}\thinspace}
\providecommand{\MR}{\relax\ifhmode\unskip\space\fi MR }
% \MRhref is called by the amsart/book/proc definition of \MR.
\providecommand{\MRhref}[2]{%
  \href{http://www.ams.org/mathscinet-getitem?mr=#1}{#2}
}
\providecommand{\href}[2]{#2}

\end{document}